\title[Geodesic invariant functions and applications to Landsberg surfaces]
{The geometry of geodesic invariant functions and applications to Landsberg
  surfaces}
\author[Elgendi]{S.G.~Elgendi}
\address{Salah G.~Elgendi, Department of Mathematics, Faculty of Science,
  Islamic University of Madinah, Saudi Arabia}
\email{salah.ali@fsc.bu.edu.eg}
\author[Muzsnay]{Z.~Muzsnay}
\address{Zolt\'an Muzsnay, Department of Geometry, Institute of
  Mathematics, University of Debrecen, Debrecen, Hungary}
\email{muzsnay@science.unideb.hu}
\keywords{Spray; holonomy distribution; $S$-invariant functions (first
  integrals); Landsberg surfaces; flag curvature}
\subjclass[2010]{ 53B40, 53C60 
}
\def\ok#1{\textcolor[rgb]{0.0,0.0,0.0}{#1}}
\def\v{m_{\scalebox{0.6}{$S$}}}
\def\smallv{m_{\scalebox{0.4}{$S$}}}
\newcommand{\R}{{\mathbb R}}
\newcommand{\F}{{\mathcal F}}
\newcommand{\T}{{\mathcal T}}
\newcommand{\C}{{\mathcal C}}
\newcommand{\V}{{\mathcal V}}
\newcommand{\halmaz}[1]{\left\{#1\right\}}
\newcommand{\set}[1]{\left\{#1\right\}}
\newcommand{\im}{\operatorname{Im}}
\newcommand{\Ker}{\operatorname{Ker}}
\newcommand{\tm}{\T M}
\newcommand{\TM}{\mathcal T\hspace{-1pt}M}
\newcommand{\hp}{h_{_{\P}}}
\newcommand{\vp}{v_{_{\P}}}
\newcommand{\Hp}{\mathcal H_{_{\P}}}
\newcommand{\Vp}{\mathcal V_{_{\P}}}
\newcommand{\chol}{C^\infty_{{\mathcal{H}ol}}}
\newcommand{\hol}{{\mathcal{H}\hspace{-1pt}ol}}
\def\H{{\mathcal H}}
\def\V{{\mathcal V}}
\def\Span#1{\textrm{Span}{\left\{{#1}\right\}}}
\newcommand{\lie}[1]{{\mathcal L}_{#1}}
\newcommand*{\medcap}{\mathbin{\scalebox{1.5}{\ensuremath{\cap}}}}%
\def\P{\mathcal P}
\def\pa{\partial}
\def\paa{\dot{\partial}}
\numberwithin{equation}{section} 
\numberwithin{figure}{section} 
\theoremstyle{plain}
\newtheorem*{theorem*}{Theorem}
\newtheorem{theorem}{Theorem}[section]
\newtheorem{proposition}[theorem]{Proposition}
\newtheorem{lemma}[theorem]{Lemma}
\newtheorem{corollary}[theorem]{Corollary}
\newtheorem{definition}[theorem]{Definition}
\theoremstyle{definition}
\newtheorem{remark}[theorem]{Remark}
\theoremstyle{remark}
\newtheorem*{acknowledgement*}{Acknowledgement}
\begin{document}

\maketitle

\begin{abstract}
 In this paper, for a given spray $S$ on an $n$-dimensional manifold $M$,
  we investigate the geometry of $S$-invariant functions. For an
  $S$-invariant function $\P$, we associate a vertical subdistribution
  $\V_\P$ and find the relation between the holonomy distribution and
  $\V_\P$ by showing that the vertical part of the holonomy distribution is
  the intersection of \ok{all spaces $\V_{\F_S}$ associated to $\F_S$
    where $\F_S$} is the set of all Finsler functions that have the
  geodesic spray $S$.  As an application, we study the Landsberg Finsler
  surfaces. We prove that a Landsberg surface with $S$-invariant flag
  curvature is Riemannian or has a vanishing flag curvature. We show that
  for Landsberg surfaces with non-vanishing flag curvature, the flag
  curvature is $S$-invariant if and only if it is constant, in this case,
  the surface is Riemannian.  Finally, for a Berwald surface,   we prove that
   the flag curvature is $H$-invariant if and only if it is constant.
\end{abstract}

\section{Introduction}

A system of second order homogeneous ordinary differential equations
(SODE), whose coefficients do not depend explicitly on time, can be
identified with a special vector field, called spray.  The solution of the
SODE is called the geodesic of the spray.  The spray corresponding to the
geodesic equation of a Riemannian or Finslerian metric is called the
geodesic spray of the corresponding metric.

\ok{The concept of geodesic invariant functions (or equivalently
  $S$-invariant functions or first integrals of $S$) has various
  applications not only in Finsler and Riemann geometries, but also in
  physics. For example, the norm and the energy functions are geodesic
  invariant functions on Finslerian or Riemannian manifolds, on Landsberg
  surfaces the main scalar of the surface is $S$-invariant.  Also, in
  physics, if a geodesic invariant function is given, then this function
  can treated as a constant of motion, in other words, these functions are
  conserved along motion. Geodesic invariant functions can give important
  information on the geometric structure. See for example
  \cite{BucataruConstCret, Foulon} and references therein.}

By \cite{MZ_ELQ}, for a given spray $S$ on a $n$-dimensional manifold $M$,
we can associate the so-called holonomy distribution which is generated by
the \ok{horizontal vector fields} and their successive Lie
brackets. The functions on $TM$ that invariant with respect to the parallel
translation are called holonomy invariant functions. These functions are
constant along the holonomy distribution \cite{MZ-Elgendi-1}. It is easy to
see that the holonomy invariant functions are also $S$-invariant functions,
that is, constant along the spray.  However, the converse is not true, that
is not any function constant along the spray is holonomy invariant. In the
literature $S$-invariant functions are also known as first integrals of the
spray $S$, for example, we refer to \cite{BucataruConstCret,Foulon}.

In this paper, we investigate the geometry of distributions associate to
homogeneous $S$-invariant functions of degree $k\neq 0$.  \ok{A function
  $\P$ defined on $TM$ is called $k$-homogeneous, if it satisfies the
  equation $\P(\lambda v) = \lambda^k \P(v)$ for any $v \in TM$. We show
  that to any $k$-homogeneous $S$-invariant nontrivial function $\P$} one
can associate \ok{the} decomposition of $TTM$
\begin{equation}
  \label{eq:TTM_decomposition}
  TT M=\Hp\oplus Span\{S\}\oplus\Vp\oplus Span\{\C\}.    
\end{equation}
where $\Hp$ and $\Vp$ are $n-1$-dimensional sub-distribution of the
horizontal (resp. the vertical) spaces associated to the spray.  Moreover,
if $\P$ is a holonomy invariant function, then
\begin{equation}
  \label{eq:ker_dP_bis}
  \mathrm{Ker} \,  d\P =  \H \oplus\Vp,
\end{equation}
where $\H$ is the horizontal distribution associated to $S$.

As a special case, for a Finsler manifold $(M,F)$, since $F$ is constant
along its geodesic spray $S$ and also along the horizontal distribution
$\H$, we focus our attention to the distribution $\V_F$.  In
\cite{MZ-Elgendi-1}, the notion of metrizability freedom of sprays was
introduced.  For a given spray $S$, $\v$ shows how many essentially
different Finsler functions can be associated to it.  The metrizability
freedom of a spray can determined by the help of its holonomy distribution
$\hol$.  We prove that $\V_{\hol}$ and $\V_F$ coincide if and only if
the metrizability freedom of $S$ is one. In the case when $\v\geq 1$, then
$\V_{\hol}$ is a sub-distribution of $\V_F$ and we prove that
\begin{displaymath}
  \V_{\hol}=\underset{F \in \mathcal F_S}{\medcap} \V_{F}
\end{displaymath}
where $\mathcal F_S$ denotes the set of Finsler functions associated to the
spray $S$.

As an application, we turn our attention to the Landsberg surfaces.  We
show that for a Landsberg surface, if the flag curvature is $S$-invariant,
then the surface is Riemannian or has a vanishing flag curvature.  Also,
for a Landsberg surface with non-vanishing flag curvature $K$, then we
establish that $K$ is $S$-invariant if and only if $K$ is constant. In this
case, the surface is Riemannian.  Finally, we prove that, for a Berwald
surface, the flag curvature is $H$-invariant if and only if $K$ is
constant.

\section{Preliminaries}

\ok{$M$ is an $n$-dimensional smooth manifold, its tangent bundle
  $(TM,\pi_M,M)$, and its subbundle of nonzero tangent vectors
  $(\T M,\pi,M)$.  On the base manifold $M$, we indicate local coordinates
  by $(x^i)$, while on $TM$, the induced coordinates are    
  $(x^i, y^i)$.  The natural almost-tangent structure of $TM$ is defined
  locally by $J = \frac{\partial}{\partial y^i} \otimes dx^i$, which is the
  vector $1$-form $J$ on $TM$. The canonical or Liouville vector field is
  the vertical vector field $\C=y^i\frac{\partial}{\partial y^i}$ on $TM$.}

\bigskip

\subsection{Spray and Finsler manifold} \

\medskip

\noindent
\ok{The geometry of sprays and Finsler manifolds has a vast literature.
  Here we are using essentially the results and the terminology of
  \cite{Grifone_1972, Grifone_Muzsnay_2000}.}

A vector field $S\in \mathfrak{X}(\T M)$ is called a spray if $JS = \C$ and
$[\C, S] = S$. Locally, a spray is expressed as follows
\begin{equation}
  \label{eq:spray}
  S = y^i \frac{\partial}{\partial x^i} - 2G^i\frac{\partial}{\partial y^i},
\end{equation}
where the \emph{spray coefficients} $G^i=G^i(x,y)$ are $2$-homogeneous
functions in the $y=(y^1, \dots , y^n)$ variable. A curve
$\sigma : I \rightarrow M$ is called regular if
$\sigma' : I \rightarrow \tm$, where $\sigma'$ is the tangent lift of
$\sigma$. A regular curve $\sigma $ on $M$ is called \emph{geodesic} of a
spray $S$ if $S \circ \sigma' = \sigma''$. Locally, $\sigma(t) = (x^i(t))$
is a geodesic of $S$ if and only if it satisfies the equation
\begin{equation}
  \label{eq:sode}
  \frac{d^2 x^i}{dt^2} +2G^i\Big(x ,\frac{dx}{dt}\Big)=0.
\end{equation}

\ok{A nonlinear connection is described by a supplemental $n$-dimensional
  distribution to the vertical distribution, denoted as
  $\H\colon u \in \tm \rightarrow \H_u\subset T_u(\tm)$. For every
  $u \in \tm$, we have}
\begin{equation}
T_u(\tm) = \H_u \oplus \V_u.
\end{equation}
Every spray $S$ induces a canonical nonlinear connection
\ok{\cite{Grifone_1972}}  through the
corresponding horizontal and vertical projectors,
\begin{equation}
  \label{projectors}
  h=\frac{1}{2}  (Id + [J,S]), \,\,\,\,    v=\frac{1}{2}(Id - [J,S]).
\end{equation}
\ok{Equivalently, the canonical nonlinear connection defined  by a spray is expressed as an almost product structure $\Gamma = [J,S] = h - v$. A
  spray $S$ is horizontal with regard to the induced nonlinear connection,
  this means that $S = hS$. Moreover, the two projectors, $h$ and $v$, have
  the following local expressions }
\begin{displaymath}
  h=\frac{\delta}{\delta x^i}\otimes dx^i, \quad\quad
  v=\frac{\partial}{\partial y^i}\otimes \delta y^i,
\end{displaymath}
and the districutions are generated by the vector fields
\begin{displaymath}
  \frac{\delta}{\delta x^i}=\frac{\partial}{\partial
    x^i}-G^j_i(x,y)\frac{\partial}{\partial y^j},\quad
  \delta y^i=dy^i+G^j_i(x,y)dx^i, 
\end{displaymath}
where
\begin{math}
  G^j_i(x,y)=\frac{\partial G^j}{\partial y^i}.
\end{math}
\ok{If $X\in \mathfrak{X}(M)$, then $\mathcal{L}_X$ and $i_{X}$ stand for
  the Lie derivative with respect to $X$ and the interior product by $X$,
  respectively. $df$ represents the differential of $f\in C^\infty(M) $. A
  skew-symmetric $C^\infty(M)$-linear map
  $L:(\mathfrak{X}(M))^\ell\longrightarrow \mathfrak{X}(M)$ is a vector
  $\ell$-form on $M$. Each vector $\ell$-form $L$ defines two graded
  derivations of the Grassmann algebra of $M$, namely $i_L$ and $d_L$ as
  follows}
$$i_Lf=0, \,\,\,\, i_Ldf=df\circ L\,\,\,\,\,\, (f\in C^\infty(M)),$$
$$d_L:=[i_L,d]=i_L\circ d-(-1)^{\ell-1}di_L.$$
\ok{The curvature tensor $R$ of the nonlinear connection is 
\begin{equation}
  \label{eq:R}
  R = - \frac{1}{2} [h, h], 
\end{equation}}
and the Jacobi endomorphism \ok{\cite{Grifone_Muzsnay_2000}} is defined by
\begin{displaymath}
  \Phi=v\circ [S,h]=R^i_{\,\,j}\frac{\partial}{\partial y^i}\otimes dx^j=
  \left(2\frac{\partial G^i}{\partial x^j}-S(G^i_j)-G^i_kG^k_j \right)
  \frac{\partial}{\partial y^i}\otimes dx^j.
\end{displaymath}
The two curvature tensors are related by
\begin{displaymath}
  3R=[J,\Phi], \quad \Phi=i_SR. 
\end{displaymath}
For simplicity, we use the notations
\begin{displaymath}
  \delta_i:=\frac{\delta}{\delta x^i}, \quad
  \partial_i:=\frac{\partial}{\partial x^i}, \quad
  \dot{\partial}_i:=\frac{\partial}{\partial y^i}.
\end{displaymath}

\begin{definition} 
  \ok{A Finsler manifold of dimension $n$ is a pair $(M,F)$, where $M$ is a
    smooth manifold of dimension $n$, and $F$ is a continuous function
    $F: TM \to \mathbb{R}$ such that:
  \begin{itemize}[leftmargin=20pt]
  \item[a)] $F$ is smooth and strictly positive on $\T M$,
  \item[b)]$F$ is positively homogenous of degree $1$ in the directional
    argument $y${\em:} $\mathcal{L}_{\mathcal{C}} F=F$,
  \item[c)] The metric tensor
    $g_{ij}= \dot{\partial}_i \dot{\partial}_j E$ has rank $n$ on $\T M$,
    where $E:=\frac{1}{2}F^2$ is the energy function.
  \end{itemize}}
\end{definition}
Since the $2$-form $dd_JE$ is non-degenerate, the Euler-Lagrange equation
\begin{equation}
  \label{eq:EL}
  \omega_E:=i_Sdd_JE-d(E-\mathcal L_{{C}}E)=0
\end{equation}
uniquely determines a spray $S$ on $TM$.  This spray is called the
\emph{geodesic spray} of the Finsler function.  The $\omega_E$ is called
the Euler-Lagrange form associated to $S$ and $E$.

\bigskip

\subsection{Holonomy distribution and metrizability freedom}

 \begin{definition}[\cite{MZ_ELQ}]
   The \emph{holonomy distribution} $\hol$ of a spray $S$ is the
 distribution on $TM$ generated by the horizontal vector fields and their
 successive Lie-brackets, that is
  \begin{equation}
    \label{eq:14}
    \hol := \Bigl\langle \mathfrak X^h(TM)  \Bigr \rangle_{Lie}  \!
    =\Big\{[X_1,[\dots [X_{m-1},X_m]...]] \ \big| \ X_i
    \in \mathfrak X^h(TM) \Big\}
  \end{equation}
  where $\mathfrak X^h(TM)$ is the modules of horizontal vector fields.
\end{definition}

\ok{The \emph{parallel translation} along curves with respect to the
  canonical nonlinear connection associated to a spray $S$ can be
  introduced through horizontal lifts.  Let $c\colon [0,1] \to M$ be a
  piecewise smooth curve such that $c(0)=p$ and $c(1)=q$, and let $c^h$ be
  a horizontal lift of the curve $c$ (that is $\pi \circ c^h = c$ and
  $\dot c^h(t)\in \mathcal H_{c^h(t)}$). The parallel translation
  $\tau\colon T_pM\to T_qM$ along $c$ is defined as follows: if $c^h(0)=v$
  and $c^h(1)=w$, then $\tau(v)=w$.}

\begin{definition}
  \ok{Let $S$ be a spray. A function $E \in C^\infty(TM)$ is called a
    \emph{holonomy invariant function}, if it is invariant with respect to
    parallel translation induced by the associated canonical nonlinear
    connection to $S$.  That is, we have $E(\tau(v))=E(v)$, where $v\in TM$
    and $\tau$ is any parallel translation.} The set of holonomy invariant
  functions is  denoted by $\chol$.
\end{definition}
Since the parallel translations can be interpreted as travelling along
horizontal lift of curves \cite{MZ-Elgendi-1}, one can characterize the
element of $\chol$ as as functions with vanishing horizontal
derivatives. It follows that 
\begin{equation}
  \label{eq:hol}
  \chol =
  \left\{
    E \in C^\infty(\TM) \ | \ \mathcal{L}_X E=0, \ X \in \hol
  \right\}.
\end{equation}

\begin{definition}
  \ok{Suppose $S$ is a spray on a manifold $M$.  If there is a Finsler
    function $F$ such that its geodesic spray is $S$, then $S$ is called
    \emph{Finsler metrizable}.}
\end{definition}
Let us denote by $\mathcal F_S$ the set of Finsler function $F$ generating
$S$ as geodesic spray.  Then, we have
\begin{equation}
  \label{eq:mathcal_F}
  F\in \mathcal F_S \qquad \Longleftrightarrow \qquad
  E=\tfrac{1}{2}F^2 \in \chol
\end{equation}
that is $F$ is a Finsler function of $S$ if and only if the energy function
associated is a $2$-homogenous regular element of $\chol$.

The problems of how many essentially different Finsler metric can be
associated with a spray, and how to determine this number in terms of
geometric quantities were considered in \cite{MZ-Elgendi-1}.  In the case
when the holonomy distribution \eqref{eq:14} of a spray $S$ is regular,
then the metrizability freedom $\v (\in \mathbb N)$ can be calculated by
the following

\begin{theorem*}(\cite[Theorem 4.4]{MZ-Elgendi-1}) Let $S$ be a metrizable
  spray with regular holonomy distribution $\hol$. Then the metrizability
  freedom can be calculated as \ok{$\v=\mathrm{codim}(\hol)$}.
\end{theorem*}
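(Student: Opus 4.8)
The plan is to identify the metrizability freedom with the number of functionally independent holonomy invariant functions and then to compute the latter by the Frobenius theorem. By the characterization \eqref{eq:mathcal_F}, a Finsler function $F$ has geodesic spray $S$ precisely when its energy $E=\tfrac12 F^2$ is a $2$-homogeneous regular element of $\chol$. Thus the freedom in choosing $F$ is exactly the functional freedom available among the $2$-homogeneous regular elements of $\chol$, and the statement reduces to determining how many functionally independent functions $\chol$ contains.

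First I would establish that $\hol$ is integrable. By its very definition \eqref{eq:14}, $\hol$ is the smallest module of vector fields containing $\mathfrak X^h(TM)$ and closed under the Lie bracket, so the Jacobi identity shows that $\hol$ is involutive as a distribution. Since $\hol$ is assumed regular, the Frobenius theorem applies: $\hol$ is tangent to a foliation, and near any point of $\TM$ there are exactly $m:=\mathrm{codim}(\hol)$ functionally independent first integrals $I_1,\dots,I_{m}$ such that every function with vanishing derivative along $\hol$ is locally a function of $I_1,\dots,I_m$. By the characterization \eqref{eq:hol}, these first integrals are precisely the elements of $\chol$.

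Next I would bring in the homogeneity carried by $\C$. A direct computation using the $1$-homogeneity of the connection coefficients $G^j_i$ gives $[\C,\frac{\delta}{\delta x^i}]=0$; together with the Jacobi identity this yields $[\C,\hol]\subseteq\hol$, so $\C$ is an infinitesimal automorphism of the foliation and descends to the local leaf space. Moreover $\C\notin\hol$, since otherwise every element of $\chol$ would be $0$-homogeneous, which is incompatible with the existence of a $2$-homogeneous regular energy. As $S$ is metrizable there is a positive $2$-homogeneous $E_0\in\chol$, the energy of a metric generating $S$, and using the $\C$-action the first integrals may be chosen homogeneous, with $E_0$ playing the role of a weight-$2$ (radial) first integral.

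Finally I would assemble the count. Every $2$-homogeneous element of $\chol$ can be written as $E=E_0\,\psi$ with $\psi:=E/E_0$ a $0$-homogeneous element of $\chol$, and conversely every such product lies in $\chol$ and is $2$-homogeneous; regularity is an open condition satisfied by $E_0$, so it does not lower the functional freedom. Hence an admissible energy is locally a regular $2$-homogeneous function of $I_1,\dots,I_m$ depending on all of them, giving metrizability freedom exactly $m=\mathrm{codim}(\hol)$. I expect the main obstacle to be precisely this last bookkeeping: one must check that imposing $2$-homogeneity restricts only the functional form of the energy and not the number of independent first integrals on which it depends, so that the count is exactly $\mathrm{codim}(\hol)$ rather than off by one. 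This is where $[\C,\hol]\subseteq\hol$, the separation $\C\notin\hol$, and the anchoring role of $E_0$ are all needed.
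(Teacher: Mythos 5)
You should first be aware that the paper never proves this statement: it is quoted verbatim from \cite[Theorem 4.4]{MZ-Elgendi-1}, so there is no internal proof to compare your argument against, and I can only judge the proposal on its own merits. On those merits your skeleton is correct, and it is essentially the strategy of the cited source: (i) $\hol$ is involutive by construction \eqref{eq:14} and regular by hypothesis, so Frobenius gives locally exactly $m=\mathrm{codim}(\hol)$ functionally independent first integrals, which by \eqref{eq:hol} are precisely the local elements of $\chol$; (ii) $[\C,\tfrac{\delta}{\delta x^i}]=0$ (from the $1$-homogeneity of $G^j_i$) gives $[\C,\hol]\subseteq\hol$, while $\C\notin\hol$ because a metrization provides $E_0\in\chol$ with $\C(E_0)=2E_0\neq 0$; (iii) the identification of energies with products $E_0\psi$, $\psi$ a $0$-homogeneous invariant, via \eqref{eq:mathcal_F}. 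Two steps in your write-up are asserted rather than proved, and both can be fixed. First, ``the first integrals may be chosen homogeneous'' is cleanest as a second application of Frobenius: the enlarged distribution $\hol+\mathrm{Span}\{\C\}$ is involutive by (ii) and regular of corank $m-1$ since $\C\notin\hol$ at every point of $\T M$, so it admits $m-1$ independent invariants $\psi_1,\dots,\psi_{m-1}$, which are exactly the $0$-homogeneous elements of $\chol$; evaluating covectors on $\C$ then shows $\{E_0,\psi_1,\dots,\psi_{m-1}\}$ is a complete functionally independent set of first integrals. Second, the products $E_0\psi_j$ need not be positive or have nondegenerate metric tensor, so to exhibit $m$ functionally independent genuine Finsler energies you should take $E_0$ together with $E_0(1+\epsilon\psi_j)$ for small $\epsilon$ on a conic neighbourhood, where regularity and positivity are open conditions; this family is functionally independent exactly when $\{E_0,\psi_1,\dots,\psi_{m-1}\}$ is, and conversely every energy of $S$, being a first integral, is a function of these $m$, so the freedom is exactly $m$ and not less. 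With these two points made explicit your proof is complete.
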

In the case when the metrizability freedom of $S$ is $\v \geq 1$, then for
every $v_0\in \TM$ there exists a neighbourhood $U\subset \TM$ and
  functionally independent element
$E_1, \dots ,E_{\smallv}$ of $\chol$ on $U$ such that any $E\in \chol$ can
be expressed as
\begin{displaymath}
  \hphantom{\qquad \forall \ v \in U.}
  E(v)=\varphi\big(E_1(v), \dots, E_{\smallv} (v)\big), \qquad \forall \ v
  \in  U,
\end{displaymath}
with some function $\varphi\colon \R^{\smallv}\to \R$. We also remark, that
in that case, since $\hol$ is generated by horizontal vector fields
  \ok{and their Lie brackets}, it contains $\H$,
therefore
\begin{equation}
  \label{eq:holonomy_dist}
  \hol = \H \oplus \V_{\hol},
\end{equation}
where $\V_{\hol}$ denotes the vertical part of $\hol$. Since
$\dim(\H)=n$ we get
\begin{equation}
  \label{eq:vert_part_hol}
  \dim \V_{\hol} = n - \v.
\end{equation}

\bigskip

\section{Geodesic invariant functions}

\medskip

\begin{definition}
  Let $S$ be a spray on $M$. Then $\P\in C^\infty(\T M)$ is called a
  geodesic invariant function, if for any geodesics $c(t)$ of $S$ it
  satisfies $P(c'(t))\equiv const$.
\end{definition}
Obviously, for a given spray $S$ the function $P\in C^\infty(\T M)$ is a
geodesic invariant function if and only if
\begin{equation}
  \label{eq:p_s}
  \lie{S}\P=0,
\end{equation}
that is $\P$ is a first integral of $S$ \cite{BucataruConstCret}. In that
spirit we can call such a function an $S$-invariant function, referring
also to the spray determining the geodesic structure.  We remark that $\P$
is constant along $S$ if and only if \ok{the dynamical covariant derivative
  of $\P$ vanishes, see for example \cite{Bucataru1}}.

\ok{As the results of \cite{Bucataru1} and \cite{MZ-Elgendi-2} show,
  certain geometric distributions associated to sprays and their
  deformation can play a central role in the investigation of their
  metrizability property.  This is why, motivated by \cite{MZ-Elgendi-2},
  for further computation and analysis we introduce} a decomposition of the
horizontal (resp.~the vertical) distributions adapted to an $S$-invariant
function $\P$, homogeneous of degree $k\neq 0$: we introduce the
endomorphisms
\begin{equation}
  \label{eq:h&v_n-1}
  \hp=h -\frac{d_J\P}{k\P}\otimes S, \qquad  \qquad
  \vp=v-\frac{d_v\P}{k\P}\otimes \C .
\end{equation}
and we set
\begin{equation}
  \label{eq:H_V_n_1}
  \Hp:=Im \, \hp, \quad \quad
  \Vp:=Im \, \vp.
\end{equation}
We have the following

\begin{lemma}
  \label{lemma:h_v_sub} \
  \begin{enumerate}[leftmargin=25pt]
  \item Properties of $\vp$ and $\Vp$: \vspace{3pt}
    \begin{enumerate}
    \item [i) \ ] $\ker (\vp)= \H \oplus \Span{C}$,
    \item [ii) \ ] $\im (\vp)= \Vp$ is an $(n-1)$-dimensional involutive
      subdistribution of $\V$,
    \item [iii) \ ] any $X\in \Vp$ is an infinitesimal symmetry of $\P$
      that is $\mathcal L_X \P=0$, \vspace{3pt}
    \item [iv) \ ] the vertical distribution have the decomposition
      $\V = \Vp\oplus Span\{\C\}.$
    \end{enumerate}
    \vspace{3pt}

  \item Properties of $\hp$ and $\Hp$: \vspace{3pt}
    \begin{enumerate}
    \item [i) \ ] $\ker (\hp)= \V \oplus \Span{S}$,
    \item [ii) \ ] $\im (\hp)=\Hp$ is an $(n-1)$-dimensional
      subdistribution of $\H$,
   
    \item [iii) \ ] the horizontal distribution have the decomposition
      $\H=\Hp\oplus Span\{S\},$
    \end{enumerate} \vspace{3pt}
  \item $J(\Hp) = \Vp$.
  \end{enumerate}
\end{lemma}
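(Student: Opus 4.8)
The plan is to reduce everything to two elementary evaluations coming from the $k$-homogeneity of $\P$ together with the spray identity $JS=\C$. Since $\lie{\C}\P=k\P$ we have $d\P(\C)=k\P$, and hence
$d_J\P(S)=d\P(JS)=d\P(\C)=k\P$ and $d_v\P(\C)=d\P(v\C)=d\P(\C)=k\P$ as well. Thus, wherever $\P\neq0$ (which is implicit in the definition of $\hp,\vp$), both coefficients $\frac{d_J\P(S)}{k\P}$ and $\frac{d_v\P(\C)}{k\P}$ equal $1$; this is precisely what forces $\hp S=0$ and $\vp\C=0$ and drives all the kernel computations.

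First I would compute the kernels directly. Writing $X=hX+vX$ and using $v(hX)=0$, $v\C=\C$, we get $\vp(X)=vX-\frac{d\P(vX)}{k\P}\C$, which vanishes exactly when $vX$ is proportional to $\C$; this gives $\ker(\vp)=\H\oplus\Span{\C}$, i.e.\ part (1)(i). The parallel computation for $\hp$, using $hS=S$, $J(vX)=0$ and $J(hX)=JX$, yields $\ker(\hp)=\V\oplus\Span{S}$, i.e.\ part (2)(i). By rank--nullity both images are then $(n-1)$-dimensional, and since $\vp(X)$ is a combination of $vX$ and $\C$ (both vertical) while $\hp(X)$ is a combination of $hX$ and $S$ (both horizontal), we obtain $\Vp\subset\V$ and $\Hp\subset\H$, covering the dimension and inclusion halves of (1)(ii) and (2)(ii).

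The key step is to upgrade these to the characterizations
\[
\Vp=\set{Y\in\V : d\P(Y)=0},\qquad \Hp=\set{Z\in\H : d_J\P(Z)=0},
\]
from which the remaining items fall out. A short computation gives $d\P(\vp X)=0$ and $d_J\P(\hp X)=0$ for all $X$ (using $d\P(\C)=k\P$, resp.\ $d_J\P(S)=k\P$ and $J(hX)=JX$), so each image lies in the corresponding kernel-hyperplane; since $d\P|_\V$ and $d_J\P|_\H$ are nonzero (they do not annihilate $\C$, resp.\ $S$), those hyperplanes are $(n-1)$-dimensional and the inclusions become equalities by dimension. With this, part (1)(iii) is immediate, $\lie{X}\P=d\P(X)=0$ for $X\in\Vp$; the decompositions (1)(iv) and (2)(iii) follow because $\C\notin\Vp$ and $S\notin\Hp$ (as $d\P(\C)=k\P\neq0$ and $d_J\P(S)=k\P\neq0$), so the direct sums have the right dimension; and part (3) follows since $J$ maps $\H$ isomorphically onto $\V$ (sending $\delta_i$ to $\paa_i$) while $d\P(JZ)=d_J\P(Z)$ identifies $\Hp$ with $\Vp$ under $J$.

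The only part needing an actual bracket computation is the involutivity in (1)(ii), and here the characterization does the work: for $Y_1,Y_2\in\Vp$ we have $[Y_1,Y_2]\in\V$ because the vertical distribution is integrable, and $d\P([Y_1,Y_2])=Y_1(Y_2\P)-Y_2(Y_1\P)=0$ since $Y_i\P=d\P(Y_i)=0$; hence $[Y_1,Y_2]\in\Vp$. I expect this to be the only genuinely structural point --- everything else is fibrewise linear algebra on $TTM$ --- so the main care is to establish the two kernel-hyperplane characterizations cleanly before invoking the integrability of $\V$.
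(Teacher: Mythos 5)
Your proof is correct, but it follows a genuinely different route from the paper's. The paper fixes the explicit local generators $h_i=\hp(\delta_i)=\delta_i-\tfrac{\P_i}{k\P}S$ and $v_i=\vp(\dot{\partial}_i)=\dot{\partial}_i-\tfrac{\P_i}{k\P}\C$ (where $\P_i=\dot{\partial}_i\P$) and proves every item by direct computation on this frame: involutivity via the explicit bracket $[v_i,v_j]=\tfrac{\P_i}{k\P}v_j-\tfrac{\P_j}{k\P}v_i$, which uses the $(k-1)$-homogeneity of the $\P_i$; the symmetry property via $\mathcal L_{v_i}\P=\P_i-\tfrac{\P_i}{k\P}k\P=0$; the decomposition (1)(iv) by a contradiction argument writing $\C=C^iv_i$; and (3) via $Jh_i=v_i$. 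You instead establish the invariant characterizations $\Vp=\V\cap\Ker d\P$ and $\Hp=\halmazpont{Z\in\H}{d_J\P(Z)=0}$, obtained from the kernel computations, rank--nullity, and the observation that $d\P|_{\V}$ and $d_J\P|_{\H}$ are nonzero hyperplane-cutting functionals (since $d\P(\C)=d_J\P(S)=k\P\neq 0$); after that, (1)(iii), (1)(iv), (2)(iii) and (3) are one-line consequences, and involutivity reduces to the remark that a bracket of vertical fields annihilating $\P$ is again vertical and annihilates $\P$ --- no coordinates and no homogeneity of the $\P_i$ are needed, only Euler's identity $\C(\P)=k\P$. What the paper's computation buys is an explicit frame with structure functions for $\Vp$, which can be useful in subsequent calculations; what your characterization buys is conceptual economy, and notably the identity $\Vp=\V\cap\mathrm{Ker}\,d\P$ is exactly the fact the paper later invokes without justification in the proof of Proposition \ref{prop:hol_in_Vp}, so your route makes that step immediate. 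The only caveat, which you correctly flagged, is that everything takes place on the open set where $\P\neq 0$, as required by the definitions \eqref{eq:h&v_n-1} of $\hp$ and $\vp$.
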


\smallskip

\begin{proof}
  We prove \emph{(1)} in detail.  The computations for \emph{(2)} are
  similar.

  \medskip

  \emph{ad i)} We note that $\H=\Ker v$, therefore $\H\subset \Ker
  \vp$. Moreover, if $V\in \ker \vp$ is vertical, then using $v(V)=V$ we
  get
  \begin{displaymath}
    \vp(V)=0 \quad \Longleftrightarrow  \quad
    V=\frac{V(\P)}{k\P}\C,
  \end{displaymath}
  that is $V\in \Span {\C}$ and we get \emph{i)}.

  \medskip

  \emph{ad ii)} We introduce the simplified notation
  $\P_i:=\dot{\partial }_i \P$ and the vector fields
  \begin{subequations}
    \label{eq:h_i_v_i}
    \begin{alignat}{1}
      \label{eq:h_i_v_i_a}
      h_i&:=\hp(\delta_i) = \delta_i -\frac{\P_i}{k\P}S,
      \\
      \label{eq:h_i_v_i_b}
      v_i&:=\vp(\dot{\partial}_i) = \dot{\partial}_i-\frac{\P_i}{k\P}\C
    \end{alignat}
  \end{subequations}
  for $i = 1, \dots, n$. We get
  \begin{subequations}
    \label{eq:span_h_i_v_i}
    \begin{alignat}{1}
    \label{eq:span_h_i_v_i_a}
      \Hp&=\Span{h_1, \dots,h_n},
      \\
      \label{eq:span_h_i_v_i_b}
      \Vp&=\Span{v_1, \dots,v_n}.
    \end{alignat}
  \end{subequations}
  We note that the vector fields in \eqref{eq:span_h_i_v_i_a} (resp.~in
  \eqref{eq:span_h_i_v_i_b}) are not independent since $y^i h_i =0$
  (resp.~$y^i v_i =0$).  Because the $k$-homogeneity property of $\P$ (and
  the $(k-1)$-homogeneity property of $\P_i$) for any $v_i,v_j\in \Vp$,
  their Lie bracket is
  \begin{displaymath}
    [v_i,v_j]
    = \Big[\dot{\partial}_i-\frac{\P_i}{k\P}y^k\dot{\partial }_k, \
    \dot{\partial }_j - \frac{\P_j}{k\P}y^\ell\dot{\partial}_\ell\Big]
    =\frac{\P_i}{k\P}\dot{\partial }_j-\frac{\P_j}{k\P}\dot{\partial
    }_i=\frac{\P_i}{k\P}v_j-\frac{\P_j}{k\P}v_i
  \end{displaymath}
  and hence from \eqref{eq:span_h_i_v_i_b} we get that $[v_i,v_j]\in \Vp$
  hence $\Vp$ is involutive.

  \medskip
  
  \emph{ad iii)} One can check that the generators
  \eqref{eq:span_h_i_v_i_b} of the distribution are infinitesimal symmetry
  of $\P$. Indeed, using Euler's theorem of the homogeneous functions we
  get for the $k$-homogeneous $\P$:
  \begin{equation}
    \label{eq:C_P}
    \mathcal L_{C}\P=k\P,
  \end{equation}
  and therefore
  \begin{equation}
    \label{eq:L_v}
    \mathcal L_{v_i} \P = \dot{\partial}_i(\P)-\frac{\P_i}{k\P}\C(\P)
    =\P_i-\frac{\P_i}{k\P}k\P =0.
  \end{equation}

  \medskip

  \emph{ad iv)} Supposing $\C \in \Vp$ we get form
  \eqref{eq:span_h_i_v_i_b} that $\C=C^i v_i$ with some coefficients
  $C^i$.  Solving this equation, since $\C(\P)=k\P$ and $v_i(\P)=0$, we
  find that $\C(\P)=C^i v_i(\P)=0$ which is a contradiction.

  \bigskip

  For \emph{3)}, we note that for the generators \eqref{eq:h_i_v_i_a} of
  \eqref{eq:span_h_i_v_i_a} and \eqref{eq:h_i_v_i_b} of
  \eqref{eq:span_h_i_v_i_b}, we get
  \begin{equation}
    J h_i=  J\delta_i -\frac{\P_i}{k\P} J\!S= \dot{\partial }_i
    -\frac{\P_i}{k\P}\C  = v_i,
   \end{equation}
   $i = 1, \dots, n$, and this proves \emph{3)}.
\end{proof}

From Lemma  \ref{lemma:h_v_sub} we get the following

\begin{corollary}
  For a given spray $S$ on $TM$, then any non trivial $S$-invariant
  function $\P\in C^\infty (\T M)$ and homogeneous of degree $k\neq 0$
  gives rise to the direct sum decomposition
  \eqref{eq:TTM_decomposition}. Moreover, if $\P$ is constant along $\Hp$,
  then we have also \eqref{eq:ker_dP_bis}.
\end{corollary}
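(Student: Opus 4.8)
The plan is to assemble the corollary directly from the three parts of Lemma~\ref{lemma:h_v_sub}, which already do almost all of the work. First I would establish the decomposition \eqref{eq:TTM_decomposition}. Starting from the fundamental splitting $TT M = \H \oplus \V$ of the tangent bundle into horizontal and vertical parts, I would substitute the two refinements proved in the lemma: part~(2)(iii) gives $\H = \Hp \oplus \Span{S}$ and part~(1)(iv) gives $\V = \Vp \oplus \Span{\C}$. Combining these yields
\begin{equation*}
  TT M = \Hp \oplus \Span{S} \oplus \Vp \oplus \Span{\C},
\end{equation*}
which is exactly \eqref{eq:TTM_decomposition}. The only thing to check here is that this is genuinely a direct sum, i.e.\ that the four summands are pairwise independent; this follows because $\Hp,\Span{S}\subset\H$ and $\Vp,\Span{\C}\subset\V$ with $\H\cap\V=0$, so the horizontal pair and the vertical pair are automatically independent of each other, while independence within each pair is the content of the two refinements just cited. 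A quick dimension count ($\dim\Hp = n-1$, $\dim\Span{S}=1$, $\dim\Vp = n-1$, $\dim\Span{\C}=1$, summing to $2n = \dim TT M$) confirms the splitting exhausts the whole space.

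Next I would prove the second assertion \eqref{eq:ker_dP_bis} under the extra hypothesis that $\P$ is constant along $\Hp$. The strategy is to identify $\Ker d\P$ as a subspace of $TTM$ and show it equals $\H \oplus \Vp$ by verifying inclusion in both directions, most cleanly via the decomposition above. I would test $d\P$ against each of the four summands. By Lemma~\ref{lemma:h_v_sub}(1)(iii) every $X\in\Vp$ satisfies $\mathcal L_X\P = X(\P) = 0$, so $\Vp \subset \Ker d\P$. By the added hypothesis, $\P$ is constant along $\Hp$, so $\Hp \subset \Ker d\P$ as well. Since $S$ is horizontal and $\P$ is $S$-invariant, $S(\P)=\lie{S}\P = 0$ by \eqref{eq:p_s}, giving $\Span{S}\subset\Ker d\P$; together with $\Hp$ this yields the full horizontal distribution $\H = \Hp\oplus\Span{S}\subset\Ker d\P$. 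Thus $\H\oplus\Vp \subset \Ker d\P$.

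For the reverse inclusion I would use the remaining summand: the Liouville field satisfies $\C(\P) = \mathcal L_{\C}\P = k\P \neq 0$ on the locus where $\P\neq0$, by Euler's relation \eqref{eq:C_P} and nontriviality of $\P$. Hence $\Span{\C}\cap\Ker d\P = 0$. Given an arbitrary $W\in\Ker d\P$, I would decompose it along \eqref{eq:TTM_decomposition} as $W = W_1 + aS + W_2 + b\C$ with $W_1\in\Hp$, $W_2\in\Vp$; applying $d\P$ and using that the first three terms are annihilated gives $0 = W(\P) = b\,\C(\P) = b k\P$, forcing $b=0$. Therefore $W\in\H\oplus\Vp$, establishing $\Ker d\P\subset\H\oplus\Vp$ and hence the equality \eqref{eq:ker_dP_bis}. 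The main subtlety—and the only place the additional hypothesis is genuinely needed—is the inclusion $\Hp\subset\Ker d\P$: without assuming $\P$ is constant along $\Hp$ one controls $\P$ only along $S$ (via $S$-invariance) and along $\Vp$ (via the infinitesimal symmetry property), so the horizontal directions transverse to $S$ need not annihilate $d\P$. The hypothesis that $\P$ be holonomy invariant (equivalently, constant along all horizontal fields, which contains $\Hp$) is precisely what closes this gap, and everything else is a matter of bookkeeping with the splitting.
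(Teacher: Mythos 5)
Your proof is correct and follows essentially the same route as the paper, which states the corollary as an immediate consequence of Lemma~\ref{lemma:h_v_sub}: you combine the splittings $\H=\Hp\oplus\Span{S}$ and $\V=\Vp\oplus\Span{\C}$ from parts (2)(iii) and (1)(iv) for the decomposition, and then use the infinitesimal-symmetry property (1)(iii), $S$-invariance, the extra hypothesis on $\Hp$, and Euler's relation $\C(\P)=k\P\neq 0$ to identify $\Ker d\P$. Your write-up simply makes explicit the bookkeeping the paper leaves to the reader, including the one point that genuinely needs the added hypothesis.
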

We have the following

\begin{proposition}
  \label{prop:hol_in_Vp}
  Let $(M,F)$ be a Finsler manifold with geodesic spray $S$.  If $\P$ is a
  $k$-homogeneous holonomy invariant function with $k\neq 0$, then
  \begin{equation}
    \label{eq:V_S_Vp}
    \V_{\hol}\subseteq \Vp.
  \end{equation}
\end{proposition}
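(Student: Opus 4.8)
The plan is to reduce the inclusion to a single observation: $\Vp$ is precisely the set of \emph{vertical} vectors that annihilate $\P$, whereas holonomy invariance forces every vector in $\hol$ — in particular every vector in its vertical part $\V_{\hol}$ — to annihilate $\P$. Once $\Vp$ is identified with $\ker d\P \cap \V$, the inclusion \eqref{eq:V_S_Vp} is immediate.

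First I would pin down this characterization of $\Vp$. By Lemma~\ref{lemma:h_v_sub}(1)(iii), every $X \in \Vp$ satisfies $\mathcal{L}_X \P = 0$, so $\Vp \subseteq \ker d\P \cap \V$. To upgrade this to an equality I would count dimensions. Since $\P$ is nontrivial and $k$-homogeneous with $k \neq 0$, Euler's relation \eqref{eq:C_P} gives $\C(\P) = k\P \neq 0$, so the restriction of $d\P$ to the $n$-dimensional vertical distribution $\V$ is a nonzero linear form; hence $\ker d\P \cap \V$ is $(n-1)$-dimensional. By Lemma~\ref{lemma:h_v_sub}(1)(ii) the subdistribution $\Vp$ is also $(n-1)$-dimensional, so the inclusion must be an equality:
\begin{equation*}
  \Vp = \ker d\P \cap \V.
\end{equation*}

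Next I would invoke the holonomy invariance of $\P$. By the characterization \eqref{eq:hol}, the statement $\P \in \chol$ means $\mathcal{L}_X \P = 0$ for every $X \in \hol$. By the decomposition \eqref{eq:holonomy_dist}, $\V_{\hol}$ is a subdistribution of $\hol$ consisting of vertical vectors; therefore every $V \in \V_{\hol}$ is vertical and satisfies $V(\P) = 0$, i.e.\ $V \in \ker d\P \cap \V = \Vp$. This yields $\V_{\hol} \subseteq \Vp$, as claimed.

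I expect no serious obstacle: the substance of the argument sits entirely in the preceding lemma. The only point requiring care is the dimension count identifying $\Vp$ with $\ker d\P \cap \V$, which rests on $\C(\P) \neq 0$ and hence on the nontriviality of $\P$ together with its nonzero homogeneity degree. As a byproduct the same reasoning shows $\C \notin \V_{\hol}$, consistent with the Liouville field failing to be holonomy invariant.
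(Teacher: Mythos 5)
Your proposal is correct and takes essentially the same route as the paper: holonomy invariance gives $\V_{\hol}\subseteq\hol\subseteq\mathrm{Ker}\,d\P$, and intersecting with the vertical distribution yields $\V_{\hol}\subseteq\V\cap\mathrm{Ker}\,d\P=\Vp$. The only difference is that the paper treats the identification $\Vp=\V\cap\mathrm{Ker}\,d\P$ as immediate from the notation \eqref{eq:H_V_n_1}, whereas you justify it explicitly via Lemma~\ref{lemma:h_v_sub} and a dimension count based on $\C(\P)=k\P\neq 0$; this fills in a detail the paper leaves implicit but is not a different argument.
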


\begin{proof}
 Assume that $\P$ is a $k$-homogeneous holonomy invariant
  function with $k\neq 0$, then $\P \in \chol$, and according to
  \eqref{eq:hol}, we have
  \ok{$\V_{\hol} \subseteq \hol \subseteq \mathrm{Ker} \, d \P$}.  It
  follows that
  \begin{displaymath}
    \ok{\V_{\hol} \subseteq   \V \cap \mathrm{Ker}\, d\P = \Vp,}
  \end{displaymath}
  where \ok{we use the notation \eqref{eq:H_V_n_1}.}
\end{proof}

\begin{remark}
  \ok{Let $(M,F)$ be a Finsler manifold with geodesic spray $S$.  If
    $\P$ is a $k$-homogeneous $S$-invariant  (but not necessarily
      holonomy invariant)  function with $k\neq 0$ and
    $\V_{\hol} \subseteq \V_\P$, then $d_hd_h \P=0$.}
\end{remark}

\begin{proof}
  \ok{We note that, since $\P$ is not necessarily a holonomy invariant
  function, we do not have $d_h \P =0$. However, the image of the curvature
  tensor $R$ is in the holonomy distribution.  If $\V_{\hol}\subseteq \Vp$,
  then $d_R\P=0$. On the other hand, using \eqref{eq:R} and the properties
  $d_{[h,h]}=[d_h,d_h]$ and 
  \begin{displaymath}
    [d_h,d_h]= d_hd_h-(-1)d_hd_h=2d_hd_h,
  \end{displaymath}
  we have 
  \begin{displaymath}
    d_hd_h\P=    \tfrac{1}{2}d_{[h,h]}\P =     - d_R\P=    0,
  \end{displaymath}
  which shows the statement of the remark.}
\end{proof}

It should be noted that in the generic case the holonomy distribution of a
spray is the $2n$-dimensional distribution $TTM$ and the metrizability
freedom is $\v=0$. For $\v=1$ we get the following

\begin{theorem}
  \label{thm:metric_freedom_1}
  Let $S$ be a given spray metrizability freedom $\v=1$, that is
  (essentially) uniquely metrizable by a Finsler function $F$. Then, for
  any $1$-homogeneous $S$-invariant function $\P$, we have
  $\V_{\hol}= \Vp$ if and only if $F=c\P$ where
  $c \in \mathbb{R}\setminus\halmaz{0}$.
\end{theorem}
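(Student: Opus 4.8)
The plan is to reduce the asserted equivalence to a comparison between $\V_F$ and $\Vp$, and then to squeeze out of the equality of these two $(n-1)$-dimensional vertical distributions a pointwise relation $\P=\lambda F$ whose factor $\lambda$ I will show is a nonzero constant. For the reduction, I would first note that $F$ \emph{itself} is a $1$-homogeneous holonomy invariant function: its energy $E=\tfrac12F^2$ lies in $\chol$ by \eqref{eq:mathcal_F}, and since $F>0$ the identity $\mathcal L_XE=F\,\mathcal L_XF$ transfers holonomy invariance from $E$ to $F$. Applying Proposition~\ref{prop:hol_in_Vp} with $k=1$ then gives $\V_{\hol}\subseteq\V_F$, while $\v=1$ together with \eqref{eq:vert_part_hol} yields $\dim\V_{\hol}=n-1=\dim\V_F$ (the latter by item~(ii) of Lemma~\ref{lemma:h_v_sub} applied to $F$). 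Hence $\V_{\hol}=\V_F$, and the theorem becomes the statement that $\V_F=\Vp$ if and only if $F=c\,\P$.

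Next I would record the characterization that drives both implications. For vertical $X$ one has $d_v\P(X)=X(\P)$, so by item~(iii) of Lemma~\ref{lemma:h_v_sub} every generator of $\Vp$ kills $\P$; since $\dim\Vp=n-1$ and $d_v\P|_\V$ is nonzero (as $\C(\P)=\P\neq0$ by \eqref{eq:C_P}), this forces
\begin{displaymath}
  \Vp=\ker\bigl(d_v\P|_\V\bigr),\qquad \V_F=\ker\bigl(d_vF|_\V\bigr).
\end{displaymath}
The ``if'' direction is then immediate: the endomorphism $\vp$ of \eqref{eq:h&v_n-1} depends on $\P$ only through the ratio $d_v\P/(k\P)$, which is invariant under rescaling $\P\mapsto c\P$, so $\V_{c\P}=\Vp$; thus $F=c\,\P$ gives $\V_F=\V_{(1/c)\P}=\Vp$.

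For the ``only if'' direction I would use that $\V_F=\Vp$ means the two nonzero functionals $d_vF|_\V$ and $d_v\P|_\V$ share a kernel, hence are proportional: $d_v\P=\lambda\,d_vF$ on $\V$, i.e. $\dot\partial_i\P=\lambda\,\dot\partial_iF$. Evaluating on $\C=y^i\dot\partial_i$ gives $\lambda=\P/F$ and therefore $\P=\lambda F$. The constancy of $\lambda$ I would establish in two independent steps. First, differentiating $\P=\lambda F$ vertically and comparing with $\dot\partial_i\P=\lambda\,\dot\partial_iF$ forces $(\dot\partial_i\lambda)F=0$, so $\dot\partial_i\lambda=0$ and $\lambda=\lambda(x)$. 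Second, both functions are $S$-invariant — $\mathcal L_SF=0$ since $E\in\chol$ and $S=hS\in\H\subseteq\hol$, while $\mathcal L_S\P=0$ by \eqref{eq:p_s} — so applying $S$ to $\P=\lambda F$ yields $S(\lambda)F=0$, whence $S(\lambda)=0$. Since $\lambda$ is independent of $y$ and $S=y^i\partial_i-2G^i\dot\partial_i$, this reads $y^i\partial_i\lambda=0$ for all $y$, giving $\partial_i\lambda=0$. Thus $\lambda\equiv c$ with $c\neq0$ (as $F>0$ and $\P$ is nontrivial), and $F=c^{-1}\P$, as required.

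The main obstacle is not computational but structural: the heart of the argument is turning the equality of distributions into the scalar relation $\P=\lambda F$ and then combining the \emph{two} distinct constancy mechanisms, homogeneity (removing the $y$-dependence of $\lambda$) and shared $S$-invariance (removing its $x$-dependence). The enabling observation, which I expect to be the genuinely load-bearing step, is that $F$ rather than merely $E$ is holonomy invariant, so that $\V_{\hol}=\V_F$ and the problem collapses to a direct comparison of $\V_F$ with $\Vp$.
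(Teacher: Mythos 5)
Your proof is correct and follows essentially the same route as the paper's: Proposition~\ref{prop:hol_in_Vp} plus a dimension count (using $\v=1$) for one implication, and for the converse the relation $\tfrac{d_vF}{F}=\tfrac{d_v\P}{\P}$ (in your phrasing, proportionality of the vertical differentials with factor $\lambda=\P/F$), which yields $F=e^{a(x)}\P$, followed by the identical $S$-invariance argument ($y^i\partial_i a=0$, then differentiate in $y$) to force the factor to be constant. The only repackaging is that you first establish $\V_{\hol}=\V_F$ and then compare $\V_F$ with $\Vp$ directly (this intermediate fact is the paper's corollary following Theorem~\ref{hol_s=intersec.V_F}), whereas the paper's proof works with $\V_{\hol}$ throughout; the load-bearing steps coincide.
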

\begin{proof}
  Since the metrizability freedom of $S$ is $1$, then by
  \cite{MZ-Elgendi-1} the codimension of $\hol$ is one. That is, the
  dimension of $\hol$ is $2n-1$ and by the fact that the dimension of
  $\H_{\hol}$ is $n$, we can conclude that the dimension of
  $\V_{\hol}=n-1$.

  Assume that $F=c\P$, then $\P$ is holonomy invariant 1-homogenous
  function. From \ok{Proposition \ref{prop:hol_in_Vp}} we have
  $\V_{\hol}\subseteq \Vp$. Since the dimension of both spaces is $n-1$, we
  get their equality.

  Conversely, assume that $\V_{\hol}= \Vp$, then
  \begin{displaymath}
    d_{\vp} F=0\Longrightarrow   d_vF-\frac{d_v\P}{\P}d_\C F=0.  
  \end{displaymath}
  Since $d_\C F=F$, then we have
  \begin{displaymath}
    d_vF-\frac{d_v\P}{\P}  F=0 \Longrightarrow \frac{d_v F}{F}
    =\frac{d_v\P}{\P}.
  \end{displaymath}
  Then, there exists a function $a(x)$ on $M$ such that
  $F=e^{a(x)}\P$. Now, since $\P$ is $S$-invariant then $\lie{S}\P=0$ and
  also we have $\lie{S} F=0$ and therefore $\lie{S} a(x)=0$. Locally, we
  obtain that
  \begin{displaymath}
    y^i\pa_i a(x)-2G^i\paa_i a(x)=0 \quad \Longrightarrow \quad y^i\pa_i a(x)=0.
  \end{displaymath}
  By differentiation with respect to $y^j$, we get $\pa_j a(x)=0$, that is
  $a(x)$ is constant function. Hence we get $F=c\P$.
\end{proof}

\begin{corollary}
  Let $(M,F)$ be a Finsler manifold with isotropic non vanishing curvature.
  Then, for any $1$-homogeneous $S$-invariant function $\P$, we have
  $\V_{\hol}= \Vp$ if and only if $F=c\P$, where $c$ is a non-zero
  constant.
\end{corollary}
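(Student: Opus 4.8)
The plan is to reduce the statement to Theorem~\ref{thm:metric_freedom_1}: once we know that a Finsler metric with isotropic non-vanishing curvature has metrizability freedom $\v=1$, the desired equivalence $\V_{\hol}=\Vp \Leftrightarrow F=c\P$ follows verbatim from that theorem. Thus the entire content of the corollary is the single implication
\begin{displaymath}
  (M,F)\ \text{isotropic with}\ K\neq 0
  \quad\Longrightarrow\quad \v=1 ,
\end{displaymath}
and all the work goes into establishing it.

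To prove this I would pin down $\V_{\hol}$ by a double inclusion. Applying Proposition~\ref{prop:hol_in_Vp} to the $1$-homogeneous holonomy invariant function $F$ gives $\V_{\hol}\subseteq \V_F$, and $\dim \V_F=n-1$ by Lemma~\ref{lemma:h_v_sub}\,(1)(ii). It therefore suffices to prove the reverse inclusion $\V_F\subseteq \V_{\hol}$, i.e.\ that already the first Lie brackets of horizontal fields fill up the whole vertical part $\V_F$. The observation recorded after Proposition~\ref{prop:hol_in_Vp} is that $\mathrm{Im}\,R\subseteq\V_{\hol}$; since $\Phi=i_SR$, for each $j$ the vector $\Phi(\delta_j)=R(S,\delta_j)$ lies in $\mathrm{Im}\,R\subseteq\V_{\hol}$. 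On the other hand, the isotropy hypothesis gives the Jacobi endomorphism the form $R^i_{\,j}=K\,(F^2\delta^i_j-y_j\,y^i)$ with $y_j=\dot{\partial}_j E$, and using the homogeneity identity $\dot{\partial}_j E = F\,\dot{\partial}_j F$ (so that $\tfrac{y_j}{F^2}=\tfrac{\dot{\partial}_j F}{F}$) one computes
\begin{displaymath}
  \Phi(\delta_j)=R^i_{\,j}\dot{\partial}_i
  =K\bigl(F^2\dot{\partial}_j-y_j\,\C\bigr)
  =K\,F^2\Big(\dot{\partial}_j-\tfrac{\dot{\partial}_j F}{F}\,\C\Big)
  =K\,F^2\,v_j,
\end{displaymath}
where $v_1,\dots,v_n$ are the generators of $\V_F$ given in \eqref{eq:h_i_v_i_b} with $\P=F$.

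Since the curvature is non-vanishing ($K\neq0$) and $F>0$ on $\TM$, the factor $K\,F^2$ is nowhere zero, whence $v_j\in\V_{\hol}$ for every $j$ and so $\V_F=\Span{v_1,\dots,v_n}\subseteq\V_{\hol}$. Together with the inclusion $\V_{\hol}\subseteq\V_F$ this yields $\V_{\hol}=\V_F$; in particular $\V_{\hol}$ has constant rank $n-1$, so $\hol=\H\oplus\V_{\hol}$ is a regular distribution of codimension one, and the cited \cite[Theorem~4.4]{MZ-Elgendi-1} gives $\v=1$. Theorem~\ref{thm:metric_freedom_1} then applies directly and completes the proof. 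I expect the main obstacle to be the middle step: correctly identifying $\mathrm{Im}\,\Phi$ with $\V_F$ under the isotropy assumption, which hinges on the precise expression for $R^i_{\,j}$ and on the identity $\dot{\partial}_j E=F\,\dot{\partial}_j F$ matching the coefficient $\tfrac{\dot{\partial}_jF}{F}$ appearing in the definition of $v_j$; once the nonvanishing scalar $K\,F^2$ is factored out, both the equality $\V_{\hol}=\V_F$ and the regularity needed to invoke \cite[Theorem~4.4]{MZ-Elgendi-1} are immediate.
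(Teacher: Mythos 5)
Your proposal is correct, and at the top level it follows the paper's own reduction: both arguments consist of establishing that the metrizability freedom is $\v=1$ and then invoking Theorem~\ref{thm:metric_freedom_1}. The genuine difference is in how $\v=1$ is obtained. The paper disposes of this step by citing \cite{MZ-Elgendi-1} for the specific statement that a Finsler metric with non-vanishing isotropic curvature has metrizability freedom one, whereas you actually prove it: Proposition~\ref{prop:hol_in_Vp} applied to the $1$-homogeneous holonomy invariant function $F$ gives $\V_{\hol}\subseteq\V_F$ with $\dim\V_F=n-1$, and the isotropy hypothesis gives $\Phi(\delta_j)=K\bigl(F^2\dot{\partial}_j-F(\dot{\partial}_jF)\,\C\bigr)=KF^2\,v_j$, so that $\mathrm{Im}\,\Phi=\Span{v_1,\dots,v_n}=\V_F$; since $\mathrm{Im}\,\Phi\subseteq\mathrm{Im}\,R$ lies in $\hol$ (the fact used in the remark following Proposition~\ref{prop:hol_in_Vp}) and is vertical, this forces $\V_F\subseteq\V_{\hol}$, hence $\V_{\hol}=\V_F$, so $\hol=\H\oplus\V_F$ is regular of dimension $2n-1$ and $\v=\mathrm{codim}(\hol)=1$ by the general theorem of \cite{MZ-Elgendi-1} quoted in the preliminaries. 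Your computation is sound, and the factor $KF^2\neq 0$ is exactly where the two hypotheses (non-vanishing curvature, $F>0$) enter; what your route buys is self-containedness, since you rely only on the codimension formula rather than on the isotropic-curvature result of \cite{MZ-Elgendi-1}. The one point you take for granted is that ``isotropic'' means the Jacobi endomorphism has the scalar-flag-curvature form $\Phi=K(F^2J-Fd_JF\otimes\C)$; for a general isotropic spray one only has $\Phi=\rho J-\alpha\otimes\C$ with $\alpha$ semi-basic, and identifying $\mathrm{Im}\,\Phi$ with $\V_F$ then requires the standard additional observation that metrizability by $F$ (symmetry of the curvature with respect to the metric tensor) forces $\alpha=\tfrac{\rho}{F}\,d_JF$. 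This is harmless here, as it is precisely the form the paper itself adopts in Section~4, but it would be worth one sentence in a final write-up.
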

 
\begin{proof}
  In the case, when the Finsler manifold has a non vanishing isotropic
  curvature, then by \cite{MZ-Elgendi-1}, the metrizability freedom of its
  geodesic spray is one.  Therefore the result follows by Theorem
  \ref{thm:metric_freedom_1}.
\end{proof}

The next theorem characterizes $\V_{\hol}$ and therefore $\hol$ as
intersection of distributions associated to geodesic invariant functions:

\begin{theorem}\label{hol_s=intersec.V_F}
  Let $S$ be a metrizable spray with regular holonomy distribution. Then,
  we have
  \begin{equation}
    \V_{\hol}=\underset{F \in \mathcal F_S}{\medcap} \V_{F}.
  \end{equation}
 \end{theorem}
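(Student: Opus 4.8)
The claim is that $\V_{\hol}$ equals the intersection $\bigcap_{F \in \mathcal F_S} \V_F$ over all Finsler functions metrizing the spray $S$. Let me think about the two inclusions separately.

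For $\subseteq$: I need $\V_{\hol} \subseteq \V_F$ for *every* $F \in \mathcal F_S$. But that's exactly Proposition prop:hol_in_Vp! Each such $F$ gives an energy $E = \frac12 F^2 \in \chol$, so $F$ itself (being 1-homogeneous) is a holonomy invariant function, and the Proposition gives $\V_{\hol} \subseteq \V_F$. So this direction is essentially free.

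For $\supseteq$: I need that anything in $\bigcap_F \V_F$ lies in $\V_{\hol}$. This is the hard direction. Dimension counting should be the engine. We have $\dim \V_{\hol} = n - \v$ where $\v$ is the metrizability freedom, which by the cited Theorem 4.4 equals $\mathrm{codim}(\hol)$. On the intersection side, each $\V_F$ is $(n-1)$-dimensional. The key structural fact I'd want to extract is: the functionally independent generators $E_1,\dots,E_\v$ of $\chol$ (from the remark after Theorem 4.4) correspond to $\v$ "independent directions" along which the various $\V_F$ can differ. Concretely, from $E = \varphi(E_1,\dots,E_\v)$ one computes $d_v E = \sum_a (\partial_a \varphi)\, d_v E_a$, so the vertical differentials $d_v F$ all lie in the span of $d_v E_1, \dots, d_v E_\v$ modulo the scaling by $\C$. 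The distribution $\Vp = \V \cap \ker d\P$ is the common vertical kernel, and I'd want to show that $\bigcap_F \ker(d_v F)\cap \V$ is cut out by exactly the $\v$ independent covectors $d_v E_1, \dots, d_v E_\v$ restricted to $\V$, giving codimension $\v$ inside the $n$-dimensional $\V$, hence dimension $n-\v = \dim \V_{\hol}$. Combined with the first inclusion, equality of dimensions forces equality of the distributions.

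Let me lay out the steps in order. First, establish $\subseteq$ via Proposition prop:hol_in_Vp applied to each $F$. Second, invoke regularity and Theorem 4.4 to record $\dim \V_{\hol} = n - \v$. Third, use the local description of $\chol$: pick functionally independent $E_1,\dots,E_\v$ generating $\chol$ near a point, and note $\V_{\hol} = \V \cap \bigcap_{a} \ker d E_a$. Fourth—the crux—show that the intersection over *all* $F \in \mathcal F_S$ already sees all of $\chol$: since every $E_a$ (or a regular 2-homogeneous combination giving a genuine Finsler energy) arises from some $F \in \mathcal F_S$, the common vertical kernel $\bigcap_F \V_F$ is contained in $\V \cap \bigcap_a \ker d E_a = \V_{\hol}$. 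Together with the reverse inclusion this yields equality.

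The main obstacle is the fourth step: I must be sure the generators $E_1,\dots,E_\v$ of $\chol$ can be realized (at least locally, and in enough combinations to pin down their common kernel) as energies of genuine \emph{Finsler} functions in $\mathcal F_S$, i.e. that the regularity condition (c) in the definition of a Finsler metric can be met simultaneously enough to separate the directions $d_v E_1, \dots, d_v E_\v$. The delicate point is that an arbitrary element of $\chol$ need not be a valid energy (positivity and the rank-$n$ metric tensor may fail), so I cannot directly intersect over all of $\chol$; I must argue that the metrizable ones still cut out the same vertical kernel. I expect to handle this by taking $F \in \mathcal F_S$ fixed and perturbing, $E_t = E + t\,E_a'$ for suitable holonomy-invariant $E_a'$, so that for small $t$ the perturbed energy remains regular and positive (an open condition), stays in $\mathcal F_S$, and its vertical kernel excludes the direction dual to $d_v E_a$; ranging over $a$ then shows $\bigcap_F \V_F$ already lies in $\bigcap_a \ker d_v E_a \cap \V = \V_{\hol}$, completing the nontrivial inclusion.
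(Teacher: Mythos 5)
Your skeleton is the same engine as the paper's: the inclusion $\V_{\hol}\subseteq\bigcap_{F\in\mathcal F_S}\V_F$ comes from Proposition \ref{prop:hol_in_Vp} applied to each $F$, and the reverse inclusion is a dimension count based on $\v=\mathrm{codim}(\hol)$ and the functional generation of holonomy invariants. The difference lies precisely in the step you yourself single out as the crux. The paper does not prove it: it invokes \cite{MZ-Elgendi-1} in the strong form that the local generators can be chosen to be \emph{genuine Finsler energies} $E_\mu=\tfrac12 F_\mu^2$ with $F_\mu\in\mathcal F_S$; granting that, equation \eqref{eq:F_F_mu} gives $\bigcap_{F\in\mathcal F_S}\ker dF=\bigcap_{\mu}\ker dF_\mu$, this intersection has dimension $2n-\v=\dim\hol$ and contains $\hol$ (holonomy invariance), hence equals $\hol$, and applying the vertical projector $v$ yields the theorem. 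So where the paper cites, you attempt to derive, which would make your proof more self-contained --- if the perturbation step worked as written.

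It does not, quite: there is a genuine gap in the perturbation argument. You argue that regularity and positivity are open conditions, so $E_t=E+tE_a'$ stays in $\mathcal F_S$ for small $t$. That handles conditions a) and c) of the definition of a Finsler function, but not b): \emph{$2$-homogeneity}, which is needed both for $E_t$ to be an energy and for the characterization \eqref{eq:mathcal_F} to apply. Homogeneity is not an open condition and is not repaired by shrinking $t$: the generators $E_1,\dots,E_{\smallv}$ of $\chol$ supplied by the Frobenius-type statement in the preliminaries are arbitrary smooth functions, and if $E_a'$ is not $2$-homogeneous then $E+tE_a'$ is not an energy for \emph{any} $t\neq 0$, so it never lies in $\mathcal F_S$ and your subtraction argument cannot start. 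The fix is a homogenization step you would need to add: since $[\C,\delta_i]=0$, dilations preserve $\H$ and hence $\hol$, so $w_a(v):=E_a\bigl(v/F(v)\bigr)$ is a $0$-homogeneous holonomy invariant function; perturb instead by the $2$-homogeneous holonomy invariant functions $E\,w_a$, and check that $dE,dw_1,\dots,dw_{\smallv}$ still span a $\v$-dimensional space of covectors, so that their common kernel is again $\hol$. Then your argument ($dE(W)=0$ and $d(E+tEw_a)(W)=0$ force $dw_a(W)=0$, whence $W\in\V\cap\hol=\V_{\hol}$) closes the proof. In short: right strategy, correct first inclusion, but the crux step as sketched fails on the homogeneity requirement and needs this supplement --- or else the direct citation of \cite{MZ-Elgendi-1} that the paper uses.
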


 \begin{proof}
   \ok{Let us suppose that $S$ is a metrizable spray with
     regular holonomy distribution on an $n$-dimensional manifold $M$, and
     its metric freedom is $\v \ (\geq 1)$. According to \cite[Theorem
     4.4]{MZ-Elgendi-1}, we have $\mathrm{codim} (\hol)=\v$, or
     equivalently,
   \begin{equation}
     \label{eq:dim_Hol_S}
     \dim (\hol)=2n - \v,
   \end{equation}
   and at the neighbourhood of any $(x,y)\in TM$, there exists a set
   $\set{E_1, \dots E_{\v}}$ of energy functions associated with $S$ such
   that any energy function of $S$ can be locally written as a functional
   combination of $E_1, \dots E_{\v}$. It follows that the corresponding
   Finsler functions $\set{F_1, \dots F_{\v}}$ are functionally
   independent, and locally generating the set of Finsler functions of $S$,
   that is every Finsler function $F$ of $S$ can be written as a functional
   combination
   \begin{displaymath}
     F=\phi(F_1, \dots, F_{\smallv})
   \end{displaymath}
   with some 1-homogeneous function $\phi$.  It follows that
   \begin{equation}
     \label{eq:F_F_mu}
     \underset{F \in \mathcal F_S}{\medcap} \mathrm{Ker}(dF)
     =\medcap_{\mu=1}^{\smallv}\, \mathrm{Ker}(dF_\mu).     
   \end{equation}
   Since $\halmaz{F_1, \dots , F_{\smallv}}$ are functionally independent,
   their derivatives are linearly independent, therefore
   $\medcap_{\mu=1}^{\smallv} \, \mathrm{Ker} (dF_\mu)$ is characterized by
   $\v$ linearly independent equations in $TTM$. It follows that
  \begin{equation}
    \label{eq:codim_ker_df}
    \dim 
    \left(
      \medcap_{\mu=1}^{\smallv} \, \mathrm{Ker} (dF_\mu)
    \right) =
    \dim (TTM) - \v = 2n - \v. 
  \end{equation}
  Moreover, the functions $F_\mu$ are all holonomy invariant functions,
  therefore $\mathrm{Ker}(dF_\mu)$ contains the holonomy distribution for
  $\mu=1, \dots ,\v$, and as a consequence, their intersection
   \begin{math}
     \medcap_{\mu=1}^{\smallv} \mathrm{Ker}(dF_\mu)
   \end{math}
   also contains $\hol$. Since the dimension of the intersection
   \eqref{eq:codim_ker_df} and the dimension of the holonomy distribution
   \eqref{eq:dim_Hol_S} are equal, we get
   \begin{equation}
      \label{eq:Hol_S_subset_Ker}
     \hol  =  \medcap_{\mu=1}^{\smallv} \mathrm{Ker}(dF_\mu).
   \end{equation}
   Using the vertical projection for \eqref{eq:Hol_S_subset_Ker} we get
   \begin{displaymath}
     \label{eq:v_hol_cap}
     \begin{aligned}[t]
       \V_{\hol}
       & =v 
       \left(
       \hol
       \right)
        \stackrel{\eqref{eq:Hol_S_subset_Ker}}{=}
         v 
         \left(
         \bigcap_{\mu=1}^{\smallv} \mathrm{Ker}(dF_\mu)
         \right)       \stackrel{\eqref{eq:F_F_mu}}{=}
       \\
       & =
         v 
         \left(
         \underset{F \in \mathcal F_S}{\medcap} \mathrm{Ker}(dF)
         \right)
         =
         \underset{F \in \mathcal F_S}{\medcap} v 
         \left(
         \mathrm{Ker}(dF)
         \right)
         =
         \underset{F \in \mathcal F_S}{\medcap} \V_{F}
     \end{aligned}
   \end{displaymath}
   showing the statement of the theorem.}
\end{proof}

\begin{corollary}
  Let $S$ be a metrizable spray by a Finsler function $F$.
  Then, $\V_{\hol}= \V_{F}$ if and only if the metrizability freedom of
  $S$ is $\v=1$.
\end{corollary}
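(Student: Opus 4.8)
The plan is to reduce the equivalence to a dimension count backed by the inclusion already furnished by Proposition~\ref{prop:hol_in_Vp}. Two facts form the backbone. First, by regularity of the holonomy distribution, \eqref{eq:vert_part_hol} gives $\dim\V_{\hol}=n-\v$. Second, since $F$ is a Finsler function it is $1$-homogeneous, so applying Lemma~\ref{lemma:h_v_sub}(1)(ii) with $\P=F$ shows that $\V_F$ is an $(n-1)$-dimensional subdistribution of $\V$, independently of the value of $\v$. I would also record that, because $F\in\mathcal F_S$, its energy $E=\tfrac{1}{2}F^2$ lies in $\chol$ by \eqref{eq:mathcal_F}, so $F$ is itself holonomy invariant; Proposition~\ref{prop:hol_in_Vp} then yields the inclusion $\V_{\hol}\subseteq\V_F$.

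For the implication $\v=1\Rightarrow\V_{\hol}=\V_F$ I would combine this inclusion with the two dimension formulas: when $\v=1$ both distributions have rank $n-1$, and an inclusion of subdistributions of equal finite rank is an equality. This is precisely Theorem~\ref{thm:metric_freedom_1} read with $\P=F$, since the condition $F=c\P$ appearing there holds trivially with $c=1$. Conversely, if $\V_{\hol}=\V_F$, then equating dimensions gives $n-\v=n-1$, hence $\v=1$; this direction is immediate and uses nothing beyond \eqref{eq:vert_part_hol} and the rank of $\V_F$.

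The only delicate point is the standing regularity hypothesis. The identity $\dim\V_{\hol}=n-\v$ rests on \cite[Theorem~4.4]{MZ-Elgendi-1}, which requires the holonomy distribution of $S$ to be regular so that the metrizability freedom coincides with $\mathrm{codim}(\hol)$. I would therefore either keep the regularity assumption of the surrounding statements in force or read the equivalence pointwise on the open set where $\hol$ has locally constant rank. Beyond securing this hypothesis, there is no real obstacle: once the inclusion $\V_{\hol}\subseteq\V_F$ and the two dimensions are in hand, the conclusion is pure linear algebra of subdistributions.
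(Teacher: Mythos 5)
Your proof is correct. The paper offers no written proof for this corollary; its placement immediately after Theorem~\ref{hol_s=intersec.V_F} indicates it is meant to be read off from the intersection formula $\V_{\hol}=\medcap_{F \in \mathcal F_S} \V_{F}$: when $\v=1$ every Finsler function metrizing $S$ is (locally) a constant multiple of $F$, and since $\V_{cF}=\V_F$ the intersection collapses to $\V_F$, while the converse is the dimension count $\dim\V_{\hol}=n-\v$ versus $\dim\V_F=n-1$. You bypass that theorem entirely and instead assemble the argument from earlier ingredients: the inclusion $\V_{\hol}\subseteq\V_F$ via Proposition~\ref{prop:hol_in_Vp} (after correctly observing that $F\in\mathcal F_S$ makes $F$ itself holonomy invariant through \eqref{eq:mathcal_F}), the rank $n-1$ of $\V_F$ from Lemma~\ref{lemma:h_v_sub}(1)(ii), and the rank $n-\v$ of $\V_{\hol}$ from \eqref{eq:vert_part_hol}, reducing both implications to linear algebra of nested distributions of known rank. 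The two routes rest on the same pillar --- the codimension formula of \cite[Theorem 4.4]{MZ-Elgendi-1}, which is where regularity of $\hol$ enters --- but yours is more self-contained, since it does not need the full strength of the intersection theorem, and it has the added virtue of making explicit the forward direction's overlap with Theorem~\ref{thm:metric_freedom_1} (read with $\P=F$, $c=1$). Your remark about carrying the regularity hypothesis of the surrounding statements is also apt: the corollary as stated omits it, but it is needed for $\dim\V_{\hol}=n-\v$, so flagging it is a point in your proof's favor rather than a gap.
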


\begin{theorem} 
  Let $F$ be a Finsler function and $S$ its geodesic spray. Then if $\P$ is
  a $1$-homogeneous nontrivial $\V_{F}$-invariant function, then it is
  regular. Moreover, if $\P$ is $S$-invariant then $\P=cF$ with some
  constant $c\in \mathbb R$.
\end{theorem}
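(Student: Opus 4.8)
The plan is to extract from the $\V_F$-invariance a pointwise proportionality between $\P$ and $F$, and then deduce both assertions from it. Since $F$ is $1$-homogeneous, Lemma~\ref{lemma:h_v_sub} (applied with $F$ in place of $\P$ and $k=1$) shows that $\V_F$ is spanned by the fields $\dot\partial_i-\tfrac{F_i}{F}\C$, with $F_i:=\dot\partial_i F$. Thus $\P$ being $\V_F$-invariant means that each of these fields annihilates $\P$, and since $\P$ is $1$-homogeneous Euler's theorem gives $\C(\P)=\P$, so the condition becomes
\[
  \dot\partial_i\P=\frac{F_i}{F}\,\P,\qquad i=1,\dots,n.
\]
I would then rewrite this as $\dot\partial_i(\P/F)=0$ for all $i$, which follows at once from the quotient rule. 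Hence $\P/F$ has vanishing vertical derivatives and so depends only on the base point: there is a function $\lambda=\lambda(x)$ on $M$ with $\P=\lambda(x)\,F$.

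The regularity of $\P$ is read off directly from this. Setting $E_\P:=\tfrac12\P^2=\lambda^2E$, with $E=\tfrac12F^2$, and using that $\lambda$ does not depend on $y$, the fundamental tensor of $\P$ is $\dot\partial_i\dot\partial_j E_\P=\lambda^2\,\dot\partial_i\dot\partial_j E=\lambda^2\,g_{ij}$. Since $F$ is a Finsler function, $g_{ij}$ has rank $n$ on $\T M$, and the nontriviality of $\P$ gives $\lambda\not\equiv0$; at every point where $\lambda\neq0$ the tensor $\lambda^2g_{ij}$ therefore has rank $n$, so $\P$ is regular. The only point that needs a word of care here is precisely this use of nontriviality, which guarantees $\lambda\not\equiv0$ so that $\lambda^2g_{ij}$ is genuinely nondegenerate rather than collapsing on a fibre.

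For the last claim I would use that the Finsler function is constant along its own geodesic spray, $\lie{S}F=0$. Differentiating $\P=\lambda F$ along $S$ then gives $0=\lie{S}\P=S(\lambda)\,F+\lambda\,\lie{S}F=S(\lambda)\,F$, so $S(\lambda)=0$. As $\lambda$ depends only on $x$, the vertical part of $S$ drops out and $S(\lambda)=y^i\pa_i\lambda=0$; differentiating in $y^j$ yields $\pa_j\lambda=0$, so $\lambda$ is a constant $c$, and therefore $\P=cF$. This final step is identical to the one closing the proof of Theorem~\ref{thm:metric_freedom_1}, and I expect no real obstacle in the argument: once the invariance is rewritten via $\C(\P)=\P$, everything follows from the single identity $\P=\lambda(x)F$.
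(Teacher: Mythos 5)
Your proof is correct and follows essentially the same route as the paper: both extract from the $\V_F$-invariance together with $1$-homogeneity the pointwise proportionality $\P=\lambda(x)F$ (the paper writes it as $F=e^{a(x)}\P$ after dividing by $\P$), deduce the regularity of $\P$ from that of $F$, and then use $\lie{S}\P=0=\lie{S}F$ to get $y^i\pa_i\lambda=0$, whence differentiation in $y^j$ gives that $\lambda$ is constant. Your quotient-rule derivation $\dot{\partial}_i(\P/F)=0$ is only a minor (and slightly cleaner) variant of the paper's logarithmic form $\tfrac{d_vF}{F}=\tfrac{d_v\P}{\P}$, since it avoids dividing by $\P$; the residual issue you flag (nontriviality gives $\lambda\not\equiv 0$ but not $\lambda$ nowhere-vanishing) is present in the paper's argument as well.
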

We remark that the theorem shows that the $S$-invariant and
$\V_{F}$-invariant property is essentially characterizing the Finsler
function associated to $S$.

\begin{proof}
  Let $\P$ be a $1$-homogeneous $\V_{F}$-invariant function. It follows
  that it satisfies the the system
  \begin{displaymath}
    d_{X}\P=0, \quad \forall X\in\V_{F}.
  \end{displaymath}
  Then we have
  \begin{displaymath}
    d_{v_{F}}\P=d_v\P-\frac{d_vF}{F}  \P=0\Longrightarrow \frac{d_v F}{F}
    =\frac{d_v\P}{\P}.
  \end{displaymath}
  Then, there exists a function $a(x)$ on $M$ such that
  $F=e^{a(x)}\P$. Then $\P=e^{-a(x)}F$, and hence $\P$ inherits its
  regularity from the Finsler function $F$.

  Now, assume that $\P$ is $S$-invariant, then we have $\lie{S}\P=0$ and
  using the fact that $\lie{S}F=0$, we have
  \begin{displaymath}
    \lie{S}F=\lie{S}e^{a(x)}\P=e^{a(x)}\P\lie{S}a(x)=0.
  \end{displaymath}
  Then, we obtain that $y^i\pa_ia(x)=0$. But by differentiating with
  respect to $y^j$ variable, we get $\pa_ja(x)=0$. That is $a(x)=const$.
  Consequently, we get $F=c\P$.
  
\end{proof}

\bigskip

\section{Applications to the Landsberg surfaces}

\medskip

\begin{definition}
  A Finsler metric $F$ on a manifold $M$ is called a Berwald metric, if in
  any standard local coordinate system in $\TM$ the connection coefficients
  $G^i_j(x,y)$ are linear. A Finsler metric $F$ is called Landsberg metric,
  if the landsberg tensor with the components
  $L_{ijk}=-\frac{1}{2}F G^h_{ijk}\frac{\partial F}{\partial y^h}$ is
  identically zero.  
\end{definition}
The Berwald and the Landsberg type Finsler metrics are the most important
particular cases in Finler geometry: for Berwald metrics the associated
canonical connection is linear, and for Landsberg metric the parallel
transport with respect to the canonical connection preserves the metric
\cite{bao}. It is well known that all Berwald type Finsler metric is also
\ok{Landsbergian}, but the long-open, so called unicorn problem: is there a
Landsberg metric that is not Berwald?  In higher dimensions ($n\geq 3$),
there exist non-regular Landsberg metrics which are not Berwladian, for
more details, we refer to \cite{Elgendi-solutions,Shen_example}.  In
dimension two, L. Zhou \cite{Zhou} investigated a class of Landsberg
surfaces and he claimed that this class is not Bewaldian. Later in
\cite{Elgendi-Youssef}, it was shown that the class is, in fact,
Berwaldian.  Up to the best of our knowledge, there is no example of
non-Berwaldian Landsberg surface.

A Finsler function $F$ with the geodesic spray $S$ is said to be of scalar
flag curvature if there exists a function $K \in C^\infty(\T M)$ such that
the Jacobi endomorphism $\Phi$ of the geodesic spray $S$ is given by
\begin{equation}
  \Phi = K(F^2J-Fd_JF\otimes \C).
\end{equation}
Since the Jacobi endomorphism $\Phi$ of any Finsler surface is in the above
form, then it is clear that all Finsler surfaces are of scalar flag
curvature $K(x,y)$.  Also, since the curvature $R$ of a spray vanishes if
and only if the Jacobi endomorphism vanishes, then the curvature of any
Finsler surface vanishes if and only if $K$ vanishes.

\bigskip

\ok{Whenever the scalar curvature $K$ of the Finsler surface is
  nonvanishing we} will use the so called Berwald frame, introduced by
Berwald in \cite{Berwald}: it is a frame on $\TM$ canonically associated to
a 2-dimensional Finsler manifold and used to investigate projectively flat
2-dimensional Finsler manifolds.  \ok{We note that when the scalar
  curvature vanishes, the Berwald frame is not defined. For more detail, we
  refer, for instance, to \cite{Taha}.}

\begin{lemma}
  \cite{Bucataru-Ebtsam} Let $(M,F)$ be a Finslerian surface with the
  geodesic spray $S$ \ok{and of flag curvature $K \neq 0$}.  Then the
  Berwald frame $\{S, H, {\mathcal C}, V\}$ satisfies $J H=V$,
  \begin{subequations}
    \begin{alignat}{1}
      \label{Berwald_f2 SH} 
      [S, H]&= K V,
      \\
      \label{Berwald_f2 VS} 
      [S,V] &= -H,
      \\
      \label{Berwald_f2 HV} 
      [H,V]&=S+I H +S(I) V,
    \end{alignat}
  \end{subequations}
  and
  \begin{equation}
    \label{V(F)=0}
    H(F)= V(F)=0. 
  \end{equation}
  Moreover, the Bianchi's identity is given by \cite[Proposition
  1.4]{Foulon}
  \begin{equation}
    \label{Eq:single_Bianchi}
    S^2(I)+V(K)+I\,K=0,
  \end{equation}
  where $K$ is the flag curvature and $I$ is the main scalar of $(M,F)$.
\end{lemma}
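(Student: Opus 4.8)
The plan is to realise the four frame fields as horizontal and vertical lifts of the canonical $g$-orthonormal pair on the surface, to read the two brackets with $S$ off the nonlinear connection and the Jacobi endomorphism, to obtain the remaining bracket from the vertical (Cartan) structure, and finally to deduce the Bianchi identity \eqref{Eq:single_Bianchi} formally from the Jacobi identity. Concretely, I would fix the frame as follows: writing $\ell^i := y^i/F$ for the normalized direction and $m^i$ for the $g$-orthonormal complement (so $g_{ij}\ell^i\ell^j = g_{ij}m^im^j = 1$ and $g_{ij}\ell^im^j = 0$), I set $\C = y^i\paa_i$, $S = y^i\delta_i$, and take $H$ and $V$ to be the horizontal and vertical lifts of $Fm$. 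Since $J\delta_i = \paa_i$, the relation $JH = V$ is immediate, as is $\C = JS$. I would prove \eqref{V(F)=0} directly: $V(F) = Fm^i\paa_iF = Fg(m,\ell) = 0$ because $\paa_iF = \ell_i$ and $m\perp_g\ell$, while $H(F) = Fm^i\delta_iF = 0$ is the fundamental fact that the horizontal derivative of the energy $E = \tfrac12F^2$ for its own geodesic spray vanishes, i.e.\ $\delta_iE = 0$.

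Next I would treat the two brackets with $S$. Splitting $[S,H] = h[S,H] + v[S,H]$, the vertical part is exactly $\Phi(H)$, since $v[S,hX] = \Phi(X)$ and $hH = H$; substituting $\Phi = K(F^2J - Fd_JF\otimes\C)$ and using $(d_JF)(H) = V(F) = 0$ leaves only the curvature term along $V$, which in the frame normalization of the reference is $KV$. The horizontal part is the dynamical covariant derivative $\nabla_SH = h[S,H]$, and it vanishes because $H$ is built from the field $m$ that is parallel along the geodesics of $S$ (on a surface the $g$-orthonormal complement of the parallel field $\ell$ is again parallel), giving \eqref{Berwald_f2 SH}. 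For \eqref{Berwald_f2 VS} I would invoke the structure identity $[S,JY] - J[S,Y] = (v-h)Y$, equivalently $\mathcal L_SJ = v - h = -[J,S]$, with $Y = H$: since $[S,H]$ is now known to be vertical, $J[S,H] = J(KV) = 0$, while $(v-h)H = -H$, so the identity collapses to $[S,V] = -H$.

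The main work, and the step I expect to be the obstacle, is \eqref{Berwald_f2 HV}. Here one must compute the bracket of a horizontal lift with a vertical lift: its horizontal part is $-V(Fm^i)\,\delta_i$, so it is governed by the vertical derivative $\paa_jm^i$ of the unit normal, while its vertical part brings in the connection coefficients $G^k_{ij}$, that is, the Cartan tensor. On a surface this tensor is carried by the single main scalar $I$, and after expressing $\paa m$ and the Cartan tensor through $I$ (together with $g(m,m)=1$ and $g(\ell,m)=0$), the horizontal part reduces to $S + IH$ and the vertical part to $S(I)V$, yielding \eqref{Berwald_f2 HV}. This is the computation where one must be most careful with the conventions and normalizations.

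Finally, \eqref{Eq:single_Bianchi} follows formally once \eqref{Berwald_f2 SH}--\eqref{Berwald_f2 HV} are available. Writing the Jacobi identity
\begin{displaymath}
  [S,[H,V]] + [H,[V,S]] + [V,[S,H]] = 0
\end{displaymath}
and substituting $[H,V] = S + IH + S(I)V$, $[V,S] = H$, and $[S,H] = KV$, one finds that $[H,[V,S]] = [H,H] = 0$, that the two horizontal contributions inside $[S,[H,V]]$ — namely $S(I)\,H$ coming from $[S,IH]$ and $-S(I)\,H$ coming from $S(I)[S,V]$ — cancel, and that all remaining terms lie along $V$ and add up to $\big(S^2(I) + IK + V(K)\big)V$. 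Setting this equal to zero gives \eqref{Eq:single_Bianchi}. Thus the only genuinely laborious point is \eqref{Berwald_f2 HV}; the three remaining relations and the Bianchi identity drop out of the $J$-structure and the Jacobi identity.
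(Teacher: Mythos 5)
The paper itself gives no proof of this lemma---it is imported from \cite{BucataruEbtsam}, with the Bianchi identity quoted from \cite[Proposition 1.4]{Foulon}---so your attempt is a genuine reconstruction, and much of its architecture is right: $JH=V$ and \eqref{V(F)=0} are immediate for your frame; the horizontal part of $[S,H]$ does vanish because $\nabla m=0$ (this silently uses $\nabla g=0$ along the spray, i.e.\ $L_{ijk}y^k=0$, which you should state); the deduction of $[S,V]=-H$ from $\mathcal{L}_SJ=v-h$ together with the verticality of $[S,H]$ is clean; and obtaining \eqref{Eq:single_Bianchi} from the three bracket relations via the Jacobi identity is correct. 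But there are two genuine gaps. The first is the normalization you wave away with ``in the frame normalization of the reference'': with $H,V$ the lifts of $Fm$ one gets
\begin{displaymath}
  v[S,H]=\Phi(H)=K\bigl(F^2\,JH-F\,d_JF(H)\,\C\bigr)=KF^2\,V,
\end{displaymath}
and no rescaling removes the factor $F^2$: replacing $H$ by $\phi H$ and $V$ by $J(\phi H)=\phi V$ gives $[S,\phi H]=\tfrac{S(\phi)}{\phi}(\phi H)+KF^2(\phi V)$, so the $V$-coefficient is always the eigenvalue $KF^2$ of $\Phi$. Homogeneity shows the discrepancy is real, not conventional: for your frame $[\C,H]=H$ and $[\C,V]=0$, hence $[\C,[S,H]]=2[S,H]$ while $[\C,KV]=0$, so $[S,H]=KV$ cannot hold identically on $\T M$. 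The stated relations are the structure equations after restriction to the indicatrix $F=1$ (to which $S,H,V$ are tangent precisely because they annihilate $F$), or equivalently with $K$ replaced by $KF^2$ throughout; as written, your Jacobi-identity computation on $\T M$ would produce $S^2(I)+F^2\,V(K)+I\,K\,F^2=0$ rather than \eqref{Eq:single_Bianchi}. This restriction step is missing.

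The second and more serious gap is that the bracket \eqref{Berwald_f2 HV} is never proved: you describe what the computation ``should'' give and defer it as the hard step. That relation is the substance of the lemma---it is where the Cartan tensor enters through $\paa_j m^i$ and the coefficients $G^k_{ij}$, and, crucially, where the coefficient of $V$ is identified with $S(I)$, i.e.\ with the Landsberg quantity; this identification is precisely what makes Lemma \ref{lem:landsberg_berwald} and all the Landsberg applications of the paper work, and it does not follow formally from anything you have established. Since your derivation of the Bianchi identity feeds on \eqref{Berwald_f2 HV}, that identity is also left unproved. In short: the frame setup, \eqref{V(F)=0}, and \eqref{Berwald_f2 VS} are correctly handled, \eqref{Berwald_f2 SH} is off by the factor $F^2$ until you pass to the indicatrix, and \eqref{Berwald_f2 HV} together with \eqref{Eq:single_Bianchi} remains a plan rather than a proof.
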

One can characterize the Berwald and Landsberg type Finler metrics in terms
of the main scalar:

\begin{lemma}
  \label{lem:landsberg_berwald}
  \cite{Matsumoto_2D} A Finsler surface $(M,F)$ is
  \begin{enumerate}
  \item Landsberg if and only if $S(I)=0.$
  \item Berwald if and only if $S(I)=0$ and $H(I)=0$.
  \end{enumerate}
\end{lemma}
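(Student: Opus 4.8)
The plan is to reduce both tensorial conditions to scalar conditions on the main scalar $I$, using that on a surface the Cartan tensor is carried by $I$ alone. Write $E=\tfrac12 F^2$, let $C_{ijk}=\tfrac12\paa_i\paa_j\paa_k E$ be the Cartan tensor and $G^h_{ijk}=\paa_i\paa_j\paa_k G^h$ the Berwald tensor, put $\ell_i:=\paa_i F$, and let $m_i$ denote the components of the angular unit covector orthogonal to $\ell_i$, whose dual vertical direction is $V=JH$. Two-dimensionality forces the representation $C_{ijk}=\tfrac{I}{F}\,m_i m_j m_k$; equivalently, up to normalization $I=C(V,V,V)$ is the single independent component of $C$, while every contraction of $C$ with $\C$ vanishes by the $1$-homogeneity of $F$. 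Thus the entire content of $C$, and of its horizontal derivatives, is encoded in $I$ together with the frame derivatives $S(I)$ and $H(I)$.

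For (1) I would invoke the standard fact that the Landsberg tensor is the geodesic (horizontal) derivative of the Cartan tensor, which in coordinates is precisely $L_{ijk}=-\tfrac12 F G^h_{ijk}\,\ell_h$. Differentiating $C_{ijk}=\tfrac{I}{F}m_i m_j m_k$ along $S$ and using $S(F)=0$ (the energy, hence $F$, is conserved along geodesics) together with $H(F)=V(F)=0$ from \eqref{V(F)=0}, the only surviving contribution is the one produced by $S(I)$: the factor $F$ is $S$-invariant, and by \eqref{Berwald_f2 VS} the spray carries the vertical direction $V$ dual to $m$ into $-H$, a purely horizontal displacement invisible to the vertical tensor $C$. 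Hence $L_{ijk}=\tfrac{S(I)}{F}\,m_i m_j m_k$, so $L\equiv 0$ if and only if $S(I)=0$.

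For (2) I would exploit that $F$ is Berwald exactly when the same Berwald tensor $G^h_{ijk}$ vanishes identically, so that the Landsberg condition is its $\ell$-contraction and the Berwald condition its full vanishing. By homogeneity $y^iG^h_{ijk}=0$, so on a surface the lowered tensor $G_{hijk}:=g_{hm}G^m_{ijk}$, symmetric in its last three slots, must have the form $G_{hijk}=A_h\,m_i m_j m_k$ with $A_h=a\,\ell_h+b\,m_h$ in the $\{\ell,m\}$ frame. Its $\ell$-part is pinned down by part (1), giving $a\propto S(I)$; its transverse $m$-part $b$ measures the variation of $I$ in the remaining horizontal direction and equals a nonzero multiple of $H(I)$, the action of $H$ on $\{S,H,\C,V\}$ being fixed by \eqref{Berwald_f2 SH}--\eqref{Berwald_f2 HV}. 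Consequently $G^h_{ijk}=0$ if and only if $a=b=0$, i.e.\ if and only if $S(I)=0$ and $H(I)=0$; combined with (1) this is the asserted characterization, and in particular it recovers the fact that every Berwald surface is Landsbergian.

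The step I expect to be the main obstacle is the transverse identity $b\propto H(I)$ in part (2). While the $\ell$-component of the Berwald tensor is handed to us by the Landsberg tensor, the $m$-component requires genuinely computing how the main scalar changes under the non-geodesic horizontal field $H$. This forces one to track the horizontal derivatives of the unit field $m=V$ and of $F$, checking via \eqref{Berwald_f2 SH}--\eqref{V(F)=0} that they contribute only terms proportional to $\C$ --- on which $C$, and hence $G$, vanishes --- or terms annihilated by the orthogonality $m_i y^i=0$. Once this bookkeeping is carried out, both tensorial equations $L_{ijk}=0$ and $G^h_{ijk}=0$ collapse to the scalar conditions on $I$, completing the argument.
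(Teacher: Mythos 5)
The paper offers no proof of this lemma at all: it is quoted as a known result of Bacs\'o--Matsumoto \cite{Matsumoto_2D} (going back to Berwald's two-dimensional theory), so your attempt can only be judged on its own merits, and it does follow the classical route: encode the Cartan tensor as $C_{ijk}=\tfrac{I}{F}m_im_jm_k$ and read the Landsberg and Berwald tensors off in the frame $\{\ell,m\}$. Part (1) of your sketch is essentially sound, but the mechanism should be stated correctly: what makes the single component $S(I)$ survive is that the \emph{dynamical covariant derivative} $\nabla$ (with respect to which $L=\nabla C$) annihilates $F$, $g$, $\ell$ and hence $m$; equivalently $\nabla V=v[S,V]=v(-H)=0$ by \eqref{Berwald_f2 VS}. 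The bracket relation by itself (``$S$ carries $V$ into $-H$, invisible to $C$'') conflates Lie transport with covariant transport; here the conclusion happens to be right, but this same conflation is what produces the error in part (2).

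The genuine gap is in part (2), at exactly the step you flag as the main obstacle and defer to ``bookkeeping'': the unconditional claim that the transverse component $b=m^hm^im^jm^kG_{hijk}$ is a nonzero multiple of $H(I)$ is false. The correct computation uses the commutation (Ricci) identity of the Berwald connection applied to $g_{ij}$, namely
\begin{equation*}
  G_{ijkl}+G_{jikl}=2\left(L_{ijk;l}+C_{ijl|k}\right),
\end{equation*}
where $|$ and $;$ denote horizontal and vertical covariant differentiation. Contracting with $m^im^jm^km^l$ and keeping track of the terms $m_{i|k}\propto L_{ijk}m^j$ and $m^im_{i;l}=\tfrac{I}{F}m_l$ --- which your sketch claims are killed by $\mathcal{C}$-contractions or by $m_iy^i=0$, and which are not --- one finds that, up to normalization, $b$ is a multiple of $H(I)+V(S(I))$, not of $H(I)$. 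The extra term $V(S(I))$ vanishes only once the Landsberg condition $S(I)\equiv 0$ is imposed, after which the identity collapses to $G_{hijk}=\tfrac{H(I)}{F^2}\,m_hm_im_jm_k$ (up to a constant). The lemma itself survives, because in both directions of (2) one may first establish Landsberg --- from $S(I)=0$ via part (1), or because Berwald implies Landsberg --- and only then invoke the transverse identity; but this ordering, and the identity it rests on, is precisely the content of the theorem, so as written your argument assumes what it needs to prove.
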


\medskip

\begin{proposition}
  \label{Prop_K(x)}
  All Landsberg surfaces with basic flag curvature  are either
  Riemannian or have vanishing flag curvature.
\end{proposition}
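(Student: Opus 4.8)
The plan is to run the single Bianchi identity \eqref{Eq:single_Bianchi} against the Landsberg characterization of Lemma~\ref{lem:landsberg_berwald}. The hypothesis ``basic flag curvature'' should be read as $K=K(x)$, i.e.\ $K$ depends only on the base point and not on the direction; in particular its vertical derivatives vanish. Since the Berwald frame $\{S,H,\mathcal C,V\}$, and hence \eqref{Eq:single_Bianchi}, is only available where $K\neq 0$, I would first restrict attention to the open set $\Omega:=\{x\in M : K(x)\neq 0\}\subseteq M$ and work over $\pi^{-1}(\Omega)$.

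On $\pi^{-1}(\Omega)$ the argument is then a short substitution. First, because $(M,F)$ is Landsberg, Lemma~\ref{lem:landsberg_berwald}(1) gives $S(I)=0$, and applying $S$ once more yields $S^2(I)=S(S(I))=0$. Second, since $V=JH$ is a vertical vector field and $K$ is basic, $V$ annihilates $K$, so $V(K)=0$. Feeding both facts into the Bianchi identity $S^2(I)+V(K)+IK=0$ collapses it to $IK=0$ on $\pi^{-1}(\Omega)$. As $K\neq 0$ throughout $\Omega$, this forces $I=0$ over $\Omega$, and since the main scalar of a Finsler surface vanishes exactly when the metric is Riemannian, $F$ is Riemannian on $\Omega$.

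To close the dichotomy I would argue as follows: if $K$ vanishes identically then the surface has vanishing flag curvature, which is the second alternative; otherwise $\Omega\neq\varnothing$ and the computation above shows $F$ is Riemannian precisely on the region where the curvature is non-zero. The main obstacle I expect is not the computation itself, which is essentially one line, but the passage from the pointwise relation $IK=0$ to the clean global statement. Because the frame and the Bianchi identity only live over $\{K\neq 0\}$, I must take care that on a connected surface one genuinely lands in exactly one of the two alternatives (Riemannian everywhere, or $K\equiv 0$) rather than a patchwork; handling the interface $\partial\Omega$, where $K$ degenerates and the Berwald frame breaks down, is the delicate point and is where connectedness (together with the continuity of $K$ and $I$) will have to be invoked.
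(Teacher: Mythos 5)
Your proof is correct and follows essentially the same route as the paper: substituting the Landsberg condition $S(I)=0$ (hence $S^2(I)=0$) and the basic-curvature condition $V(K)=0$ into the Bianchi identity \eqref{Eq:single_Bianchi} to obtain $IK=0$, whence $I=0$ or $K=0$. Your additional care about restricting to the open set $\{K\neq 0\}$ where the Berwald frame exists, and about gluing the pointwise dichotomy into a global one, is a refinement the paper's own (one-line) proof silently omits, but it does not change the substance of the argument.
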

\begin{proof}
  Let $(M,F)$ be a Landsberg surface with basic flag curvature, that is
  $K=K(x)$ is a function on the manifold $M$.  Then $V(K)=0$, and by using
  the fact that $S(I)=0$ together with \eqref{Eq:single_Bianchi}, we have
  \begin{displaymath}
    KI=0.
  \end{displaymath}
  Then we have either $K=0$ or $I=0$ and this completes the proof.
 \end{proof}

 \begin{proposition}
     For any  Landsberg surface $(M,F)$ 
  with non-vanishing curvature, we have
  \begin{equation}
\label{Eq:Bianchi_beta}
 \beta +I \, V(\beta)+H(I)+V^2(\beta)=0,
 \end{equation}
    where $\beta:=\frac{S(K_0)}{K_0}- S\left(\int_0^t I(t)dt \right)$, $K_0\in C^\infty(\T M)$, $V(K_0)=0$, $I$ is the main scalar of $(M,F)$ and the integration here is taken with respect to $V$. 
\end{proposition}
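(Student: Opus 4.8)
The plan is to reduce the claimed identity to a short sequence of commutator computations in the Berwald frame, the crux being to recognize that $\beta$ is nothing but the logarithmic $S$-derivative of the flag curvature. First I would exploit the Landsberg hypothesis: by Lemma~\ref{lem:landsberg_berwald} we have $S(I)=0$, hence $S^2(I)=0$, and the Bianchi identity \eqref{Eq:single_Bianchi} collapses to $V(K)+IK=0$, that is $V(K)=-IK$. Since $K\neq0$, integrating this first-order equation along the integral curves of $V$ (with $V=d/dt$) gives the representation $K=K_0\,e^{-\int_0^t I\,dt}$ with $V(K_0)=0$, which is exactly the data entering the definition of $\beta$. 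Writing $\lambda:=\ln|K|$, we then have $\lambda=\ln|K_0|-\int_0^t I\,dt$, so that
\[
\beta=\frac{S(K_0)}{K_0}-S\Big(\int_0^t I\,dt\Big)=S(\lambda)=\frac{S(K)}{K}.
\]
This identification is where I expect the only real subtlety to lie, since it requires interpreting the $V$-integral correctly (using $V\big(\int_0^t I\,dt\big)=I$) and relating the two pieces of $\beta$ to $S(\lambda)$; everything afterwards is mechanical.

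With $\beta=S(\lambda)$ in hand, I record the two elementary facts $V(\lambda)=V(K)/K=-I$ and (again) $S(I)=0$. Next I would compute $V(\beta)$ by commuting $V$ past $S$: using $[V,S]=-[S,V]=H$ from \eqref{Berwald_f2 VS},
\[
V(\beta)=V\big(S(\lambda)\big)=S\big(V(\lambda)\big)+[V,S](\lambda)=S(-I)+H(\lambda)=H(\lambda),
\]
since $S(I)=0$. Thus $V(\beta)=H(\lambda)$, a clean intermediate identity that also explains, at the structural level, why $H(I)$ will eventually appear.

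Finally I would compute $V^2(\beta)=V\big(H(\lambda)\big)$ by commuting $V$ past $H$. Here the Landsberg condition enters a second time: from \eqref{Berwald_f2 HV} with $S(I)=0$ the bracket reduces to $[H,V]=S+IH$, so that $[V,H]=-(S+IH)$, and therefore
\[
V^2(\beta)=H\big(V(\lambda)\big)+[V,H](\lambda)=-H(I)-S(\lambda)-I\,H(\lambda)=-H(I)-\beta-I\,V(\beta),
\]
using $S(\lambda)=\beta$ and $H(\lambda)=V(\beta)$ from the previous step. Rearranging yields exactly $\beta+I\,V(\beta)+H(I)+V^2(\beta)=0$, which completes the proof.

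In summary, the whole argument rests on the single reduction $\beta=S(K)/K$ (equivalently $\beta=S(\lambda)$ with $\lambda=\ln|K|$) together with the two bracket relations $[V,S]=H$ and $[V,H]=-(S+IH)$ valid in the Landsberg case. The main obstacle is purely the bookkeeping identification of $\beta$ with the logarithmic $S$-derivative of $K$; once that is settled the commutator algebra is routine, and the Landsberg hypothesis $S(I)=0$ is used precisely twice — to kill the $S^2(I)$ term in the Bianchi identity and to drop the $S(I)V$ term from the bracket $[H,V]$.
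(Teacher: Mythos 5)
Your proposal is correct and takes essentially the same route as the paper: the Landsberg condition gives $S(I)=0$, the Bianchi identity collapses to $V(K)=-IK$, integration along $V$ yields $K=K_0\,e^{-\int_0^t I\,dt}$ and hence the key identification $\beta=S(K)/K$, after which the bracket relations $[V,S]=H$ and $[H,V]=S+IH$ (the latter using $S(I)=0$ again) deliver the identity. Your only departure is working with $\lambda=\ln|K|$ rather than $K$ itself, which merely turns the paper's product-rule computations (e.g.\ $V(S(K))=V(\beta)K+\beta V(K)$) into the equivalent additive ones, so the substance of the argument is unchanged.
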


\begin{proof} 
  Assume that $(M,F)$ is Landsberg surface with non-vanishing $K$. We work
  on a neighbourhood of a point $(x_0, y_0) \in \TM$ where $F$ is
  regular. Then from Lemma \ref{lem:landsberg_berwald} we get that $S(I)=0$
  and hence $S^2(I)=S(S(I))=0$. Then, \eqref{Eq:single_Bianchi} has the
  form
 \begin{equation}
 \label{Eq:R;2}
 V(K)=-IK.
 \end{equation}
  Since $K\neq 0$, then we can write
 $$\frac{V(K)}{K}=-I.$$
 \ok{Using integration as in \cite{Sabau_Shimada} we obtain}
 \begin{equation}\label{Eq:R_mu}
   K=K_0 \exp{\left(-\int_0^t I(t)dt \right)},
 \end{equation}  
 where $K_0\in C^\infty(\T M)$ and $V(K_0)=0$. But since $K$ is homogeneous
 of degree $0$ and by the fact that $[\C,V]=0$, then $K_0$ must be
 homogeneous of degree $0$, that is, $\C(K_0)=0$.  That is, $V(K_0)=0$ and
 $\C(K_0)=0$, hence $K_0=K_0(x)$.
   
 Taking the fact that $S(I)=0$ into account, \eqref{Eq:R_mu} implies
 \begin{displaymath}
   S(K)=S(K_0) \exp\left(-\int_0^t I(t)dt \right)+ K S\left(-\int_0^t I(t)dt \right) =S(K_0)\frac{K}{K_0}+KS\left(-\int_0^t I(t)dt \right).
 \end{displaymath}
 From which we can write
   \begin{equation} 
     \label{Eq:R,1_mu,1}
   \frac{S(K)}{K}=\frac{S(K_0)}{K_0}+ S\left(-\int_0^t I(t)dt \right).
   \end{equation}
     Then, \eqref{Eq:R,1_mu,1} can be written in the form
\begin{equation}
  \label{Eq:R,1}
  S(K)=\beta K,
\end{equation}
where $\beta=\frac{S(K_0)}{K_0}+  S\left(-\int_0^t I(t)dt \right)$.    Applying $S$ on \eqref{Eq:R;2} and using
\eqref{Eq:R,1}, we have
\begin{equation}
  \label{Eq:R;2,1}
S(V(K))=-IS(K)=-\beta I K.
\end{equation}
Applying $V$ on \eqref{Eq:R,1} and using \eqref{Eq:R;2}, we have
\begin{equation}
\label{Eq:R,1;2}
V(S(K))=V(\beta)K+\beta V(K)=V(\beta)K-\beta I K.
\end{equation}
Now, by the property that $[V,S] = H$ \eqref{Berwald_f2 VS},
\eqref{Eq:R;2,1} and \eqref{Eq:R,1;2} we have
\begin{equation}
\label{Eq:R,2}
H(K)=V(\beta)K.
\end{equation}

From which together with \eqref{Eq:R;2}, we get
\begin{equation}
  \label{Eq:R,2;2}
  V(H(K))=V^2(\beta)K+V(\beta)V(K)=V^2(\beta)K-IK \ V(\beta).
\end{equation}
\begin{equation}
 \label{Eq:R;2,2}
 H(V(K))=-H(I)K-IH(K)=-H(I)K-IK \ V(\beta).
\end{equation}
Since $[H,V]K=H(V(K))-V(H(K))$ then by \eqref{Berwald_f2 HV}, \eqref{Eq:R,2;2} and \eqref{Eq:R;2,2} we have 
\begin{displaymath}
S(K)+I \,  H(K)= -K \, H(I)-K \, V^2(\beta)
\end{displaymath}
from which together with the fact that $K\neq 0$ and by \eqref{Eq:R,1},
\eqref{Eq:R,2}, we have
 \begin{equation*}
 \beta +I \, V(\beta)+H(I)+V^2(\beta)=0.
 \end{equation*}
 This completes the proof.
\end{proof}

As a consequence of the above proposition, we have the following \ok{result
  which is obtained by \cite{Ikeda} and \cite{Taha}, proved in a different
  way}.
\begin{theorem}\label{Th1:S(K)=0}
  Let $(M,F)$ be a Landsberg surface \ok{with non-zero flag curvature}. If
  the flag curvature is $S$-invariant, then the surface is
  \ok{Riemannian}.
\end{theorem}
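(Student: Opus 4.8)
The plan is to convert the two hypotheses into vanishing statements for the Berwald-frame derivatives of $K$, and then extract $I=0$, since for a Finsler surface the vanishing of the main scalar $I$ is exactly the Riemannian condition (the Cartan tensor is then zero). The $S$-invariance of the flag curvature means $S(K)=0$, while the Landsberg hypothesis gives $S(I)=0$ by Lemma \ref{lem:landsberg_berwald}, hence $S^2(I)=0$. Substituting $S^2(I)=0$ into the Bianchi identity \eqref{Eq:single_Bianchi} yields the basic relation
\begin{displaymath}
  V(K)=-IK.
\end{displaymath}

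Next I would compute the mixed derivative $H(K)$ from the commutation rules of the frame. From \eqref{Berwald_f2 VS} we have $[V,S]=H$, so
\begin{displaymath}
  H(K)=[V,S](K)=V(S(K))-S(V(K))=0-S(-IK)=S(I)\,K+I\,S(K)=0,
\end{displaymath}
where the last equality uses $S(K)=0$ and $S(I)=0$. (Equivalently, by the previous proposition $S(K)=\beta K$ forces $\beta=0$ since $K\neq 0$, and then $H(K)=V(\beta)K=0$.)

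The decisive step is to evaluate the bracket $[S,H]$ on $K$ in two ways. On one hand, \eqref{Berwald_f2 SH} identifies $[S,H]$ with the vector field $KV$, so $[S,H](K)=K\,V(K)=-IK^2$ by the relation above. On the other hand, as a commutator, $[S,H](K)=S(H(K))-H(S(K))=0$, since both $H(K)$ and $S(K)$ vanish. Comparing the two expressions gives $-IK^2=0$, and because $K\neq 0$ we conclude $I=0$. The vanishing of the main scalar $I$ characterizes Riemannian surfaces, so $(M,F)$ is Riemannian.

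I expect the only real subtlety to be careful bookkeeping with the Berwald-frame brackets and the sign convention in $[S,V]=-H$; the heart of the argument—playing the operator identity $[S,H]=KV$ against the commutator $S\circ H-H\circ S$ on the single function $K$—is very short once $S(K)=0$ and $H(K)=0$ are in place, and it is this two-way evaluation of $[S,H](K)$ that produces the crucial factor $-IK^2$.
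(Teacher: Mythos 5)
Your proof is correct, and it reaches the conclusion by a more self-contained route than the paper. The paper's own proof leans on the machinery of the preceding propositions: from $S(K)=0$ and \eqref{Eq:R,1} it deduces $\beta=0$, hence $V(\beta)=V^2(\beta)=0$; then \eqref{Eq:Bianchi_beta} gives $H(I)=0$ (so the surface is in fact Berwaldian along the way), \eqref{Eq:R,2} gives $H(K)=0$, the bracket \eqref{Berwald_f2 SH} evaluated on $K$ gives $KV(K)=0$, and finally $V(K)=0$ together with $\C(K)=0$ makes $K$ basic, so Proposition \ref{Prop_K(x)} (which re-invokes the Bianchi identity) yields $I=0$. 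You arrive at the same decisive move---evaluating $[S,H]=KV$ on $K$ once as a vector field and once as a commutator---but you bypass the $\beta$-proposition entirely: the Bianchi identity \eqref{Eq:single_Bianchi} with $S(I)=0$ gives $V(K)=-IK$ at once (no integration along $V$, no $K_0$, no $\beta$), the bracket \eqref{Berwald_f2 VS} gives $H(K)=[V,S](K)=V(S(K))-S(V(K))=0$, and then $KV(K)=-IK^2=0$ forces $I=0$ in one stroke, with the standard fact that a vanishing main scalar means the surface is Riemannian. What the paper's longer route buys is reusability (the $\beta$-proposition is invoked again in Theorems \ref{Prop_K_S-invariant} and \ref{Th:S(K)=0}) and the explicit intermediate conclusion $H(I)=0$; what your version buys is a short, self-contained argument for this particular theorem using only the frame brackets and the Bianchi identity. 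Your sign bookkeeping ($[V,S]=H$, $-S(-IK)=S(I)K+IS(K)=0$) is also correct.
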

\begin{proof}
  Let $(M,F)$ be a Landsberg surface with non-vanishing flag curvature $K$ and the property that $S(K)=0$. Then, by \eqref{Eq:R,1} we get that $\beta=0$ and therefore $V(\beta)=V^2(\beta)=0$. Now by   \eqref{Eq:Bianchi_beta}, we obtain that $H(I)=0$ and the surface is Berwaldian. Moreover,    by \eqref{Eq:R,2}, we have $H(K)=0$ and using the fact that $S(K)=0$,  \eqref{Berwald_f2 SH} implies
  \begin{displaymath}
    K \, V(K)=0,
  \end{displaymath}
  from which together with Proposition \ref{Prop_K(x)} the result follows. 
\end{proof}

\begin{theorem}
  \label{Prop_K_S-invariant}
  Let $(M,F)$ be a Landsberg surface with non-vanishing flag curvature $K$,
  then $K$ is $S$-invariant if and only if $K$ is constant. In this case,
  $F$ is Riemannian.
\end{theorem}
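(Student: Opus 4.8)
The plan is to establish the two implications of the equivalence separately, with the forward direction being immediate and the converse carrying all the content; the final assertion that $F$ is Riemannian will come along for free from Theorem \ref{Th1:S(K)=0}. For the easy direction, if $K$ is constant then every derivative of $K$ vanishes, in particular $S(K)=0$, so $K$ is $S$-invariant.

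For the converse, I would assume $K$ is $S$-invariant, i.e.\ $S(K)=0$, and show that in fact $dK=0$. The first step is to invoke Theorem \ref{Th1:S(K)=0}: a Landsberg surface with non-vanishing, $S$-invariant flag curvature is Riemannian, so its main scalar satisfies $I=0$ (this also delivers the closing claim ``$F$ is Riemannian''). The strategy from here is to show that all four directional derivatives of $K$ along the Berwald frame $\{S,H,\C,V\}$ vanish, working on the neighbourhood where $K\neq 0$ so that the frame is defined. Since $I=0$ and $S(I)=0$ (Landsberg, Lemma \ref{lem:landsberg_berwald}), the Bianchi identity \eqref{Eq:single_Bianchi} collapses to $V(K)=0$. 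The homogeneity of $K$ (degree $0$) gives $\C(K)=0$ by Euler's theorem. Together with the hypothesis $S(K)=0$, three of the four derivatives are then in hand.

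The remaining and decisive step is to extract $H(K)=0$. Here I would feed the already-established identities into the commutator relation \eqref{Berwald_f2 VS}, namely $[S,V]=-H$: evaluating on $K$ gives $[S,V]K = S(V(K))-V(S(K)) = 0$, while the same bracket equals $-H(K)$, forcing $H(K)=0$. (Equivalently, one may read off $H(K)=0$ directly from the computation in the proof of Theorem \ref{Th1:S(K)=0}, where $\beta=0$ yields $H(K)=V(\beta)K=0$ via \eqref{Eq:R,2}.) With $S(K)=H(K)=V(K)=\C(K)=0$ and $\{S,H,\C,V\}$ a frame on $\TM$, we conclude $dK=0$, so $K$ is locally constant and hence constant on (the connected component of) $\TM$.

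The main obstacle is precisely obtaining $H(K)=0$, the ``transverse horizontal'' derivative: the other three derivatives vanish almost for free (by hypothesis, by Bianchi once $I=0$ is known, and by homogeneity), and the crux is recognizing that the bracket $[S,V]=-H$ converts the vanishing of $S(K)$ and $V(K)$ into the vanishing of $H(K)$. In this sense the substantive input is Theorem \ref{Th1:S(K)=0} (which forces $I=0$), after which only this short frame computation and the observation that the Berwald frame spans $T\TM$ remain.
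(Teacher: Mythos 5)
Your proof is correct, and there is no circularity in invoking Theorem \ref{Th1:S(K)=0} at the outset, since that theorem is proved before and independently of this statement. The overall skeleton --- showing that all four Berwald-frame derivatives $S(K)$, $H(K)$, $V(K)$, $\C(K)$ vanish, whence $K$ is constant --- is the same as the paper's, but your route to the two nontrivial derivatives is genuinely different. The paper stays inside the $\beta$-machinery of the preceding proposition: from $S(K)=0$ and \eqref{Eq:R,1} it gets $\beta=0$, from \eqref{Eq:R,2} it gets $H(K)=0$ \emph{first}, and then extracts $V(K)=0$ from the bracket $[S,H]=KV$ of \eqref{Berwald_f2 SH}; Theorem \ref{Th1:S(K)=0} enters only at the very end, to conclude that $F$ is Riemannian. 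You instead front-load Theorem \ref{Th1:S(K)=0} to get $I=0$ (using the standard fact, implicit in the paper's Proposition \ref{Prop_K(x)}, that a Finsler surface is Riemannian iff its main scalar vanishes), so that the Bianchi identity \eqref{Eq:single_Bianchi} collapses to $V(K)=0$, and then the other commutator $[S,V]=-H$ of \eqref{Berwald_f2 VS} yields $H(K)=0$. What each buys: your argument is shorter and more modular --- given Theorem \ref{Th1:S(K)=0}, it needs only the Bianchi identity and one bracket, bypassing $\beta$ entirely --- whereas the paper's version establishes the equivalence ``$S$-invariant $\Leftrightarrow$ constant'' using only the $\beta$-identities, so that part stands logically independent of Theorem \ref{Th1:S(K)=0}, which is reserved for the final Riemannian claim. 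Both proofs correctly use that $\{S,H,\C,V\}$ spans $T\TM$ where $K\neq 0$, and your parenthetical alternative for $H(K)=0$ (via $\beta=0$ and \eqref{Eq:R,2}) is exactly the paper's step.
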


\begin{proof}
  Let $(M,F)$ be a surface with non-vanishing flag curvature $K$. It is
  obvious that if $K$ is constant then $S(K)=0$ and hence $K$ is
  $S$-invariant. Now, assume that $K$ is $S$-invariant, that is, $S(K)=0$.
  By \eqref{Eq:R,1}, $\beta=0$ and then by \eqref{Eq:R,2} we get that
  $H(K)=0$. Since $[S,H]=KV$, then $KV(K)=S(H(K))-H(S(K))=0$ and hence
  $V(K)=0$ since $K\neq0$. Moreover, $K$ is zero homogeneous in $y$, then
  $\C(K)=0$. Therefore, we have
  $$S(K)=0, \quad H(K)=0,\quad V(K)=0, \quad \C(K)=0$$
  which implies that $K$ is constant. Then, $F$ is Riemnnian by Theorem
  \ref{Th1:S(K)=0}.
\end{proof}

A smooth function $f$ on $\tm$ is said to be $H$-invariant if $H(f)=0$.
Let's end this work by the following result.
\begin{theorem}
  \label{Th:S(K)=0}
  Let $(M,F)$ be a Berwald surface \ok{with non-vanishing flag
    curvature}. Then, the flag curvature $K$ is $H$-invariant if and only
  if $K$ is constant.
\end{theorem}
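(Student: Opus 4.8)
The plan is to prove the nontrivial implication — that $H$-invariance of $K$ forces $K$ to be constant — by routing the Berwald hypothesis through the main scalar $I$. The key structural observation is that a Berwald surface is in particular Landsberg (Lemma \ref{lem:landsberg_berwald}), so every identity derived earlier for Landsberg surfaces with non-vanishing curvature is at my disposal; specifically \eqref{Eq:R,1}, \eqref{Eq:R,2}, and the Bianchi-type relation \eqref{Eq:Bianchi_beta}. The extra content of the Berwald condition beyond Landsberg is exactly $H(I)=0$ (again Lemma \ref{lem:landsberg_berwald}), and this is precisely the ingredient that will let \eqref{Eq:Bianchi_beta} collapse.

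First I would dispose of the trivial direction: if $K$ is constant, then $H(K)=0$ at once. For the converse, I would assume $H(K)=0$ and work on a neighbourhood where $F$ is regular. By \eqref{Eq:R,2} we have $H(K)=V(\beta)K$, and since $K\neq 0$ this immediately yields $V(\beta)=0$, and hence also $V^2(\beta)=0$. These are the two vanishings I need to simplify the Bianchi-type identity.

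Next I would substitute into \eqref{Eq:Bianchi_beta}. Invoking the Berwald condition $H(I)=0$ together with $V(\beta)=V^2(\beta)=0$, the identity $\beta+I\,V(\beta)+H(I)+V^2(\beta)=0$ reduces to $\beta=0$. By \eqref{Eq:R,1} this gives $S(K)=\beta K=0$, so that $K$ is now simultaneously $S$- and $H$-invariant. From here I would reproduce the closing argument of Theorem \ref{Prop_K_S-invariant}: applying the bracket relation \eqref{Berwald_f2 SH}, $[S,H]=KV$, to $K$ gives $K\,V(K)=S(H(K))-H(S(K))=0$, whence $V(K)=0$ since $K\neq 0$; and because $K$ is homogeneous of degree zero, $\C(K)=0$ as well. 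Thus all four frame derivatives $S(K),H(K),V(K),\C(K)$ vanish, and as $\{S,H,\C,V\}$ is a frame on $\TM$, the function $K$ must be constant.

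I do not expect a serious obstacle in this argument; the one point that deserves emphasis is that the entire chain hinges on $H(I)=0$, which is exactly what distinguishes a Berwald surface from a merely Landsberg one. Without it, \eqref{Eq:Bianchi_beta} would only give $\beta=-I\,V(\beta)$, and one could not deduce $\beta=0$ — so $H$-invariance of $K$ would not propagate to $S$-invariance, and the conclusion would fail. In other words, the Berwald hypothesis is used in an essential (not merely incidental) way, and the careful step is verifying that $V(\beta)=0$ suffices to trigger the collapse once $H(I)=0$ is available.
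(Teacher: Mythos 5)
Your proposal is correct and follows essentially the same route as the paper's own proof: deduce $V(\beta)=0$ from \eqref{Eq:R,2}, use the Berwald condition $H(I)=0$ to collapse \eqref{Eq:Bianchi_beta} to $\beta=0$, obtain $S(K)=0$ from \eqref{Eq:R,1}, then get $V(K)=0$ via \eqref{Berwald_f2 SH} and conclude constancy from the vanishing of all four frame derivatives. Your version is in fact slightly more explicit than the paper's at two points (stating $V^2(\beta)=0$ and spelling out the bracket computation $K\,V(K)=S(H(K))-H(S(K))$), but the argument is the same.
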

\begin{proof}
  Let $(M,F)$ be a Berwald surface. If $K$ is constant then it is clear
  that $H(K)=0$ and hence it is $H$-invariant. Now, assume that
  $H(K)=0$. If $K=0$, then the proof is done. If $K\neq 0$, then by
  \eqref{Eq:R,2}, $V(\beta)=0$. Since the surface is Berwaldian, then
  $H(I)=0$. Therefore, by \eqref{Eq:Bianchi_beta}, $\beta=0$ and by
  \eqref{Eq:R,1}, we have $S(K)=0$.  Using \eqref{Berwald_f2 SH}, we get
  that $V(K)=0$ since $K\neq0$. Since $\C(K)=0$, we have
  $$S(K)=0, \quad H(K)=0,\quad V(K)=0, \quad \C(K)=0$$
  which means that $K$ is constant.
\end{proof}


 \bigskip \bigskip
 
\end{document}